\documentclass{amsproc}
\usepackage{euscript, graphicx, epstopdf}
\usepackage{cases}
\usepackage{mathrsfs}
\usepackage{bbm}
\usepackage{amssymb}
\usepackage{txfonts}
\usepackage{amscd}
\usepackage{amsfonts,latexsym,amsmath,amsxtra,mathdots,amssymb,latexsym,mathabx,mathtools}
\usepackage[all,cmtip]{xy}
\usepackage{color}
\usepackage{colordvi}
\usepackage{multicol}
\usepackage{hyperref}
	\hypersetup{pdfcreator=XeLaTeX with hyperref}
\usepackage{tikz}
\usepackage{float}
\usepackage{setspace, floatflt}

\usepackage{tensor}

\allowdisplaybreaks

\newcommand{\BC}{{\mathbb {C}}} 
 \newcommand{\BF}{{\mathbb {F}}}

 \newcommand{\BR}{{\mathbb {R}}}

\newcommand{\BW}{{\mathbb {W}}}

\newcommand{\GL}{{\mathrm {GL}}}

 \newcommand{\Tr}{{\mathrm{Tr}}}

\newcommand{\Gal}{\mathrm{Gal}} \newcommand{\N}{\mathrm{N}}
\newcommand{\res}{\mathrm{Res}}
\newcommand{\ind}{\mathrm{Ind}} \newcommand{\cind}{c\mbox{-}\mathrm{Ind}}
\newcommand{\llc}{\boldsymbol{l}}
\newcommand{\weil}{\mathcal{W}}

\newcommand{\ra}{\rightarrow}

\def\-{^{-1}}

\def\shskip{\hskip 0.5 pt}

\makeatletter
\g@addto@macro\normalsize{\setlength\abovedisplayskip{3pt}}
\makeatother

\makeatletter
\g@addto@macro\normalsize{\setlength\belowdisplayskip{3pt}}
\makeatother

\newcommand{\delete}[1]{}

\theoremstyle{plain}

\newtheorem{thm}{Theorem}[section] 
\newtheorem{lem}[thm]{Lemma}  \newtheorem{prop}[thm]{Proposition}

\newtheorem {rem}[thm]{Remark}

\numberwithin{equation}{section}

\begin{document}
	\setcounter{section}{-1}
	
	\title{Gamma factors of level zero supercuspidal representations}

	\author{Chang Yang}
	\address{Key Laboratory of High Performance Computing and Stochastic Information Processing (HPCSIP)\\ College of Mathematics and Statistics \\ Hunan Normal University \\Changsha,  410081\\China}
	\email{cyang@hunnu.edu.cn}

	\subjclass[2010]{22E50, 33C10}
	\keywords{}

	\begin{abstract}
		We give an explicit formula for the twisted gamma factor for a pair of irreducible supercuspidal representations of level zero. We also obtain an explicit formula for the unramified base change of level zero supercuspidal representations.
	\end{abstract}
	
	\maketitle
	
	\section{Introduction}
	
	\subsection{Main result and Motivation}
	Throughout this note, let $F$ be a $p$-adic field with residue field of $q$ elements. Denote by $\mathfrak{o}$ and $\mathfrak{p}$ the ring of integers of $F$ and the maximal ideal of $\mathfrak{o}$.
	
	 Let $\pi$ and $\tau$ be irreducible smooth representations of $\GL_n(F)$ and $\GL_m(F)$. Let $\psi$ be a nontrivial character of the additive group of $F$. The gamma factor $\gamma (s,\pi \times \tau,\psi)$ was first introduced in \cite{J-PS-S-Rankin+Selberg}, along with an $L$-function $L(s,\pi \times \tau)$ and a local constant $\varepsilon(s,\pi \times \tau,\psi)$.  
	
	It is desirable to have explicit formulae for these gamma factors. By \cite{J-PS-S-Rankin+Selberg}, the computation can be reduced to the case of supercuspidal representations. When $\tau=1_F$ is the trivial representation of $\GL_1(F)$, then $\gamma(s,\pi \times 1_F,\psi)$ is nothing but the gamma factor $\gamma(s,\pi,\psi)$ in the sense of Godement-Jacquet \cite{Godement-Jacquet}. There are explicit formulae for $\gamma(s,\pi,\psi)$ in terms of ``non abelian Gauss sums'' (See \cite{Bushnell-orders+Gauss+sums}, \cite{Bushnell-Frohlich-Gauss+sum}; also \cite{B-H-GL2} for the $\GL_2$ case). On the other hand, in the twisted case, one has only an explicit formula for the conductor of the local constants $\varepsilon(s,\pi \times \tau,\psi)$ (See \cite{B-H-K-Conductor}). 
	
	In this short note, we confine ourselves to irreducible supercuspidal representations of level zero. A smooth representation $\pi$ of $\GL_n(F)$ is called of level zero if it contains a nontrivial fixed vector for the subgroup $1+\mathfrak{p}\mathrm{M}_n(\mathfrak{o})$ of $\GL_n(\mathfrak{o})$. Our main result is
     \begin{thm}
     	Let $\pi$ resp. $\tau$ be an irreducible supercuspidal representation of level zero of $\GL_n(F)$ resp. $\GL_m(F)$, with $n>m$. Let $\psi$ be a character of $F$ of levle one, that is, $\psi$ is trivial on $\mathfrak{p}$ but not on $\mathfrak{o}$. Then
     	\begin{align}\label{formula::Introduction-Main}
     	\gamma (s,\pi \times \tau,\psi) = (-1)^{nm-(n,m)}q^{-nm/2}\prod_{i=0}^{(n,m)-1} G(\shskip \widetilde{\eta_{\pi}}^{q^i}\circ \N_{[n,m]:n}\cdot \widetilde{\eta_{\tau}}\circ \N_{[n,m]:m},\tilde{\psi}).
     	\end{align}
     Here
     \begin{itemize}
     	\item[$\bullet$] $(n,m)$ is the greatest common divisor of $n$ and $m$;
     	\item[$\bullet$] $\widetilde{\eta_{\pi}}$ resp. $\widetilde{\eta_{\tau}}$ is the regular character of $\BF_{q^n}^{\times}$ resp. $\BF_{q^m}^{\times}$ that corresponds to $\tilde{\pi}$ resp. $\tilde{\tau}$ via Green's parameterization (see \S \ref{section::n*1}, \eqref{diagram::Green}), while $\tilde{\pi}$ resp. $\tilde{\tau}$ is the cuspidal representation of $\GL_n(\BF_q)$ resp. $\GL_m(\BF_q)$ which is uniquely determined by $\pi$ resp. $\tau$ (see \S \ref{section::gamma-finite+local}, \eqref{diagram::level 0});
     	\item[$\bullet$] $\N_{[n,m]:n}$ resp. $\N_{[n,m]:m}$ is the norm map from $\BF_{q^{[n,m]}}^{\times}$ to $\BF_{q^n}^{\times}$ resp. $\BF_{q^m}^{\times}$;
     	\item[$\bullet$] $\tilde{\psi}$ is the character on $\kappa_F \cong \BF_q$ which is induced by $\psi$, $\ \tilde{\psi}(\bar{x})=\psi(x)\text{ for all }x\in \mathfrak{o}$. 
      \end{itemize}
     and 
      \begin{align}\label{defn::Gauss sum}
     G(\beta,\varphi) := \sum_{a \in \BF_{q^N}^{\times}} \beta(a) \varphi (\Tr \shskip a^{-1}) = \sum_{a \in \BF_{q^N}^{\times}} \beta^{-1}(a) \varphi (\Tr\shskip a)
     \end{align}
     is the Gauss sum for a multiplicative character $\beta$ of $\BF_{q^N}^{\times}$ and an additive character $\varphi$ of $\BF_q$, where $\Tr$ is the trace map from $\BF_{q^N} \ra \BF_q$.
     \end{thm}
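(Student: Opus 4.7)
My approach breaks the proof into two stages: first reducing the local gamma factor to a finite-field Rankin--Selberg gamma factor via a Whittaker-model calculation, then computing that finite gamma factor explicitly using Green's parameterization.

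\emph{Stage 1: Finite-local comparison.} Realize the level zero supercuspidal $\pi$ via compact induction from $\GL_n(\mathfrak{o})F^\times$ of an extension of the inflation of $\tilde\pi$, and similarly for $\tau$. This produces explicit Whittaker functions $W_\pi$, $W_\tau$ whose restriction to $\GL_n(\mathfrak{o})$ is the finite-field Whittaker function of $\tilde\pi$. Plugging these into the Rankin--Selberg local integral, supercuspidality of $\pi,\tau$ combined with the assumption $n>m$ forces the integral to concentrate on $\GL_m(\mathfrak{o})$, so that one side of the functional equation collapses to a finite sum over $N_m(\BF_q)\backslash \GL_m(\BF_q)$ while the other becomes a single nonzero term. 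Taking the ratio yields
\[
\gamma(s,\pi\times\tau,\psi) = q^{-nm/2}\,\gamma^{\mathrm{fin}}(\tilde\pi\times\tilde\tau,\tilde\psi),
\]
which is independent of $s$, as expected for a pair of supercuspidals. This is precisely the content of \S\ref{section::gamma-finite+local}.

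\emph{Stage 2: Finite-field evaluation.} Next I would invoke Green's formula: on an elliptic torus $T_n\cong \BF_{q^n}^\times$, the character and Whittaker function of $\tilde\pi$ admit closed-form expressions in terms of $\tilde\eta_\pi$ and its Galois conjugates. Substituting these formulae into $\gamma^{\mathrm{fin}}(\tilde\pi\times\tilde\tau,\tilde\psi)$ and changing variables from $N_m(\BF_q)\backslash\GL_m(\BF_q)$ to its elliptic torus yields a character sum over $\BF_{q^n}^\times\times\BF_{q^m}^\times$ of the form
\[
\sum_{(x,y)}\tilde\eta_\pi(x)\,\tilde\eta_\tau(y)\,\tilde\psi\bigl(\mathrm{Tr}_{\BF_{q^n}/\BF_q}(x)\cdot\mathrm{Tr}_{\BF_{q^m}/\BF_q}(y)\bigr),
\]
with appropriate Weyl-group coefficients. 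Using the $n\times 1$ base case handled in \S\ref{section::n*1} to dispose of the Weyl combinatorics and to convert the product of traces into a trace over the compositum, I then invoke the decomposition
\[
\BF_{q^n}\otimes_{\BF_q}\BF_{q^m}\;\cong\;\prod_{i=0}^{(n,m)-1}\BF_{q^{[n,m]}},
\]
one factor per orbit of the joint Frobenius action. Each orbit contributes a Gauss sum $G(\tilde\eta_\pi^{q^i}\circ\N_{[n,m]:n}\cdot\tilde\eta_\tau\circ\N_{[n,m]:m},\tilde\psi)$, producing the advertised $(n,m)$-fold product.

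\emph{Main obstacle.} The substantive difficulty is Stage 2. Green's formula for a cuspidal character is intrinsically intricate because of the alternating Weyl sum, and keeping the bookkeeping clean in the Rankin--Selberg sum is delicate; in particular the sign $(-1)^{nm-(n,m)}$ is a combined contribution of the Steinberg-type signs $(-1)^{n-1}$, $(-1)^{m-1}$ from the two cuspidals, of the Weyl-sum signs, and of the sign appearing in the finite functional equation. The clean factorization into exactly $(n,m)$ Gauss sums over $\BF_{q^{[n,m]}}^\times$ is the real content of the theorem, and matching it term-by-term with the orbit decomposition above -- equivalently, with a Hasse--Davenport-type base change identity on the tower of unramified extensions -- will be the main calculational step.
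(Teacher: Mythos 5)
Your Stage 1 is conceptually on the right track but misquotes the finite--local comparison constant: the Nien--Zhang result (Theorem \ref{thm::Nien-Zhang} in the paper) gives
\[
\gamma(s,\pi\times\tau,\psi) \;=\; \omega_\tau(-1)^{n-1}\,q^{m(n-m-1)/2}\,\gamma(\widetilde\pi\times\widetilde\tau,\widetilde\psi),
\]
not $q^{-nm/2}\gamma(\widetilde\pi\times\widetilde\tau,\widetilde\psi)$; the exponent $-nm/2$ in the target formula only emerges after the Gauss sums are extracted, so it cannot appear at the end of Stage 1.

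The real gap is Stage 2, and you have in fact named it yourself as the ``main obstacle'' without resolving it. You propose to evaluate the finite-field Rankin--Selberg gamma factor $\gamma(\widetilde\pi\times\widetilde\tau,\widetilde\psi)$ directly, by substituting Green's character formula into the Bessel functions, ``disposing of the Weyl combinatorics,'' and then reading off the product of Gauss sums from the ring decomposition $\BF_{q^n}\otimes_{\BF_q}\BF_{q^m}\cong\prod_{i=0}^{(n,m)-1}\BF_{q^{[n,m]}}$. No one (including this paper) has carried out that direct computation: the alternating Weyl sum in Green's formula and the $\GL_m(\BF_q)$-average in the Rankin--Selberg sum do not visibly collapse to a trace over the compositum, and the intermediate character sum you propose, $\sum_{(x,y)}\widetilde\eta_\pi(x)\widetilde\eta_\tau(y)\widetilde\psi(\Tr_{n/1}(x)\Tr_{m/1}(y))$, is not what the Bessel-function pairing produces and does not factor through $\Tr_{[n,m]/1}$. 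There is no Hasse--Davenport-type identity to ``invoke'' here -- producing such an identity \emph{is} the theorem.

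The paper avoids all of this by staying on the $p$-adic side and only afterwards descending to finite fields. Concretely: it writes $\tau$ as an automorphic induction $(\Delta_K^{m-1}\eta_\tau)^{K/F}$ of a character of the unramified degree-$m$ extension $K$ (Lemma \ref{lem::level 0 =AI}); applies the base-change/automorphic-induction comparison (Proposition at \eqref{formula::gamma-BC + AI}) together with Lemma \ref{lemma::gamma-character} to replace $\gamma(s,\pi\times\tau,\psi)$ by $(-1)^{mn-n}\gamma(s,\pi_{K/F}\times\Delta_K^{m-1}\eta_\tau,\psi_K)$; computes $\pi_{K/F}$ explicitly via Mackey's restriction formula on the Galois side (Proposition \ref{prop::base change}) as a parabolic product of $(n,m)$ level-zero supercuspidals of $\GL_{n/(n,m)}(K)$; and then applies multiplicativity of $\gamma$ together with the known $n\times 1$ formulas (Nien--Zhang, then Nien's Gauss-sum formula \eqref{formula::Nien n*1}) to each factor. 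The ring decomposition of $\BF_{q^n}\otimes\BF_{q^m}$ that you flag as the key structure is indeed the shadow of the Mackey decomposition, but the clean factorization into $(n,m)$ Gauss sums falls out of the multiplicativity of $p$-adic gamma factors applied to $\pi_{K/F}$, not from a hands-on finite-field Weyl-sum computation. The finite-field $n\times m$ formula (Theorem \ref{theorem::Main-Gamma-finite}) is then a \emph{corollary} of the $p$-adic theorem, not the route to it.

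So: your high-level structure (finite--local reduction, then factorization into $(n,m)$ Gauss sums governed by the tensor-product decomposition) identifies the right shape of the answer, but the argument you sketch does not establish it. You need either to actually produce the finite-field Weyl-sum computation you allude to -- which would be a substantial new argument -- or to reorganize along the paper's lines, reducing to $n\times 1$ at the $p$-adic level via base change and automorphic induction \emph{before} passing to finite fields.
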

 
	The motivation of our work arises from a question on gamma factors over finite fields that was proposed in \cite[Conjecture 2.2]{Nien-Zhang}. A formula similar to \eqref{formula::Introduction-Main} for gamma factors over finite fields is given in Theorem \ref{theorem::Main-Gamma-finite}. Let us add some words on the strategy of our method. We first use a result in \cite{B-H-K-LLC+Conductor} to reduce the general case to the case $m=1$. The main point of our work is then to determine the base change, in the sense of \cite{Arthur-Clozel}, of a level zero supercuspidal representation. We use the explicit local Langlands correspondence for level zero representations, developped in \cite{B-H-Explicit+level+0}, to transfer the question to the Galois side. When $m=1$, the computation is a consequence of a result on gamma factors over finite fields \cite{Nien-n*1} and a connection formula between gamma factors for level zero supercuspidal represenations and gamma factors over finite fields \cite{Nien-Zhang}.

    \subsection{Notations and Conventions}
   
    (1)\ For a smooth representation $\pi$ of $\GL_n(F)$, denote by $\pi^{\vee}$ the smooth dual of $\pi$. If $\pi$ is irreducible, denote by $\omega_{\pi}$ the central character of $\pi$. For representations $\pi_i$ of $\GL_{n_i}(F)$, $i=1,\cdots,r$, denote by $\pi_1 \times \cdots \times \pi_r$ the representation of $\GL_{n_1+\cdots+n_r}(F)$ obtained from $\pi_1\otimes \cdots \otimes\pi_r$ by normalized parabolic induction. For a representation $\pi$ and a character $\chi$ of $\GL_n(F)$, denote $\chi \pi$ to be the representation on the space of $\pi$ given by $(\chi \pi)(g)=\chi(g)\pi(g)$. Let $|\cdot|$ denote the normalized absolute value on $F$ and $\nu$ denote the character $|\cdot|\circ \det$ of $\GL_n(F)$.
    
    (2)\ Denote by $U$ the group of units in $\mathfrak{o}$, by $\kappa = \mathfrak{o}/\mathfrak{p}$ the residue field of $F$. We fix a uniformizer $\varpi$ in $\mathfrak{p}$.

     If $E/F$ is a finite field extension, we use the analogous notations $\mathfrak{o}_E$, $\mathfrak{p}_E$, $\mathrm{U}_E$, etc. The norm map $E^{\times} \ra F^{\times}$ is denoted $\N_{E/F}$, and the trace $E \ra F$ is $\Tr_{E/F}$.
     
    We fix a separable algebraic closure $\bar{F}$ of $F$. All finite extension of $F$ are supposed to be contained in $\bar{F}$. Hence an unramifed extension over $F$ of a fixed degree is unique. For $E/F$ a finite unramified extension, we still use $\varpi$ as a uniformizer in $\mathfrak{p}_E$.
    
    For a finite field extension $K/F$, denote by $\weil_K$ the Weil group of $K$. We will view $\weil_K$ as a subgroup of $\weil_F$ as in \cite[\S 28.5]{B-H-GL2}. Let $\boldsymbol{a}_F \colon \weil_F \ra F$ denote the Artin reciprocity map (\cite[\S 29.1]{B-H-GL2}). We fix a geometric Frobenius element $\Phi$ in $\weil_F$ and a arithmetic Frobenius element $\phi = \Phi^{-1}$.
    
    If $E/F$ is a finite extension, we write $\ind_{E/F}$ rather than $\ind_{\mathcal{W}_E}^{\weil_F}$ for the functor of smooth induction from $\weil_E$ to $\weil_F$. Also, if $\rho$ is a smooth representation of $\weil_F$, we put $\res_{E/F}\shskip \rho = \rho |_{\weil_E}$.

  \section{Preliminaries}
    \subsection{Gamma factors for Weil representations} 
    
     Let $\mathcal{G}^{ss}_n(F)$ denote the set of isomorphism classes of semisimple smooth representations of $\weil_F$ of dimension $n$ and write $\mathcal{G}^{ss}(F) = \bigcup_{n \geqslant 1} \mathcal{G}^{ss}_n(F)$\footnote{In this note, we do not need the nilpotent part in Weil-Deligne representations.}. For $\sigma \in \mathcal{G}^{ss}(F)$ and a nontrivial character $\psi_F$ of $F$, let $L(s,\sigma)$ be the Artin $L$-function and $\varepsilon(s,\sigma,\psi_F)$ be the local constant defined by Langlands and Deligne (see \cite{Deligne-Local+Constants}). Denote by $\sigma^{\vee}$ the contragradient representation of $\sigma$. Similar to the definition of local gamma factors in \cite{J-PS-S-Rankin+Selberg}, we define
   \begin{align}\label{formula::defn-gamma}
   \gamma(s,\sigma,\psi_F) = \varepsilon(s,\sigma,\psi_F) \frac{L(1-s,\sigma^{\vee})}{L(s,\sigma)}.
   \end{align}
   
   Suppose that $\sigma_1,\sigma_2 \in \mathcal{G}^{ss}(F)$. By the additive properties of $L$ and $\varepsilon$ (\cite[\S 29.3,29.4]{B-H-GL2}), we have
   \begin{align}
   \gamma(s,\sigma_1\oplus \sigma_2,\psi_F)& = \gamma(s,\sigma_1,\psi_F) \gamma(s,\sigma_2,\psi_F). \label{formula::gamma--additive}
   \end{align}
   
   Let $K/F$ be a finite field extension and $\rho \in \mathcal{G}^{ss}_n(K)$. Note that $(\ind_{K/F}\shskip \rho)^{\vee} \cong \ind_{K/F}\shskip \rho^{\vee} $. By the inductive properties of $L$ and $\varepsilon$ (\cite[\S 29.3,29.4]{B-H-GL2}), we have 
   \begin{align}
   \frac{\gamma(s,\ind_{K/F}\shskip \rho,\psi_F)}{\gamma(s,\rho,\psi_K)} &= \frac{\gamma(s,\ind_{K/F}\shskip 1_K,\psi_F)^n}{\gamma(s,1_K,\psi_K)^n}, \label{formula::gamma--inductive}
   \end{align}
   where $1_K$ stands for the trivial representation of $\weil_K$ and $\psi_K=\psi_F\circ \Tr_{K/F}$.
   \begin{rem}
   	The functions $L$ and $\varepsilon$ can be defined for Weil-Deligne representations of $F$ (see \cite[\S 31.3]{B-H-GL2}), so is $\gamma$ by \eqref{formula::defn-gamma}. It turns out that $\gamma(s,(\sigma,\mathrm{n}),\psi_F) = \gamma (s,\sigma,\psi_F)$, where we denote by $(\sigma,\mathrm{n})$ a Weil-Deligne representation and $\mathrm{n}$ is the nilpotent part.
   \end{rem}
   
      \subsection{Local Langlands correspondence for $\GL_n(F)$}
      Let $\mathcal{A}_n(F)$ be the set of isomorphism classes of irreducible smooth representations of $\GL_n(F)$ and $\mathcal{A}^0_n(F)$ its subset consisting of supercuspidal representations. Let $\mathcal{G}_n(F)$ be the set of isomorphism classes of semisimple Weil-Deligne representations of $F$ of dimension $n$. Denote by $\mathcal{G}^0_n(F)$ the set of isomorphism classes of irreducible smooth representations of $\weil_F$ of dimension $n$. We identify $\mathcal{G}^0_n(F)$ as a subset of $\mathcal{G}_n(F)$ by letting the nilpotent part be $0$.
   
   The Langlands correspondence for $\GL_n$ over $F$, proved by Harris-Taylor \cite{Harris-Taylor} and by Henniart \cite{Henniart-proof+of+LLC}, is the unique collection of bijections 
   \begin{align}\label{formula::LLC}
   \llc_n^F \colon \mathcal{A}_n ( F ) \ra \mathcal{G}_n ( F )
   \end{align}
   such that the map $\llc_1^F$ is given by the local class field theory, and that for $\pi \in \mathcal{A}_n(F)$, $\pi' \in \mathcal{A}_{n'} (F)$, we have
   \begin{align*}
   L ( s, \pi \times \pi') & =  L (s, \llc_n^F(\pi) \otimes \llc_{n'}^F ( \pi') ), \\
   \varepsilon ( s, \pi \times \pi' , \psi)  &=  \varepsilon (s, \llc_n^F(\pi) \otimes \llc_{n'}^F ( \pi'),\psi). \label{formula::LLC-epsion}
   \end{align*}
   Hence
   \begin{align}
   \gamma(s,\pi \times \pi',\psi) = \gamma(s, \llc_n^F(\pi)\otimes \llc_n^F(\pi'),\psi).
   \end{align}
   
   The maps $\{\llc_n^F\}$ are first constructed for $\mathcal{A}^0_n(F)$ and then extended to $\mathcal{A}_n (F)$, using the classification of Langlands, Bernstein and Zelevinsky. We recall briefly this process (see \cite[Chapter 2]{Henniart-Characterization+LLC}). According to \cite[9.3]{Zelevinsky-II}, each essentialy square-integrable representation $\pi$ is of the form $\mathrm{St}_k(\rho)$, for some $k \geq 1$ and $\rho \in \mathcal{A}^0_n(F)$, where $\mathrm{St}_k(\rho)$ denotes the unique irreducible quotient of $\rho \times \nu\rho \times \cdots \times \nu^{k-1}\rho$. We have 
   \begin{align}
   \llc_n^F (\mathrm{St}_k (\rho)) = \llc_n^F(\rho) \otimes \mathrm{Sp}_k,
   \end{align}
   where $\mathrm{Sp}_k$ denotes the special Weil-Deligne representation of $\weil_F$ of dimension $k$ (\cite[\S 31.1]{B-H-GL2}). Denote by $\mathcal{A}^{\square}_n(F)$ the isomorphism classes of essentially square-integrable representations. For every $\pi \in \mathcal{A}_n^{\square}(F)$, there is a unique $\alpha(\pi ) \in \BR$ such that $\nu^{-\alpha}\pi$ is unitary and square-integrable. Let $r \geq 1$ and $\pi_1,\cdots,\pi_r$ be elements of $\mathcal{A}^{\square}_n(F)$. Suppose that
   \begin{align}\label{Condition}
   \alpha(\pi_1) \geq \cdots \geq \alpha(\pi_r). \tag{$\star$}
   \end{align}
   Then the representation $\pi_1 \times \cdots \times \pi_r$ has a unique irreducible quotient $J(\pi_1,\cdots,\pi_r)$. 
   By the Langlands classification (\cite{Silberger-Langlands+Classification}), each $\pi\in \mathcal{A}_n(F)$ is of the form $J(\pi_1,\cdots,\pi_r)$ for some $r \geq 1$ and $\pi_1,\cdots,\pi_r \in \mathcal{A}^{\square}_n(F)$. The isomorphism class of $J(\pi_1,\cdots,\pi_r)$ does not depend on the order of $\pi_i$ provided that the conditon (\ref{Condition}) is satisfied. We have
   \begin{align}\label{formula::LLC-additive}
   \llc_n^F (J(\pi_1,\cdots,\pi_r)) = \llc_{n_1}^F(\pi_1) \oplus \cdots \oplus \llc_{n_r}^F(\pi_r).
   \end{align}
   
    \section{Base change and automorphic induction}\label{section::BC+AI}

    A key ingredient in our method is to use a formula on the twisted local gamma factors which compares base change and automorphic induction.
   		
    Let $K/F$ be a cyclic extension. Recall that, in \cite[\S 1.6]{Arthur-Clozel}, base change associates to every $\pi \in \mathcal{A}^0_n(F)$ an irreducible smooth representation $\pi_{K/F}$ of $\GL_n(K)$\footnote{The definition of $\pi_{K/F}$ was extended to all $\pi \in \mathcal{A}_n(F)$ in \cite[pp. 59-60]{Arthur-Clozel}, but we do not need this fact in our context.}. Via the Langlands correspondence, the base change from $F$ to $K$ corresponds to the restriction to $\weil_K$ from representations of $\mathcal{W}_F$ (\cite[\S 3.1]{B-H-K-LLC+Conductor}). So, we have
   \begin{align}\label{formula::base change and LLC}
   \res_{K/F}(\boldsymbol{l}^F_n (\pi)) =  \boldsymbol{l}^K_n ( \pi_{K/F} ).
   \end{align}
   
    Assume that $K/F$ is cyclic of degree $d$. According to \cite{Henniart-Herb-AI}, automorphic induction sends any generic $\rho$ in $\mathcal{A}_n(K)$ to a unique $\rho^{K/F} \in \mathcal{A}_{nd}(F)$ (see also \cite[\S 2.5]{B-H-K-LLC+Conductor}). The automorphic induction from $K$ to $F$ corresponds, again via the Langlands correspondence, to the induction to $\weil_F$ from representations of $\weil_K$, i.e., 
    \begin{align}\label{formula::automorphic induction}
    \ind_{K/F}(\llc_n^K (\rho)) = \llc_{nd}^F(\, \rho^{K/F}),
    \end{align}
    where $\rho$ is generic (\cite[\S 3.9]{B-H-K-LLC+Conductor}).

    The following result can be found in \cite[A.8]{B-H-K-LLC+Conductor}. We supply here a ``proof'' different than that in loc.cit.
    \begin{prop}
    	Assume $K/F$ to be a cyclic extension. Let $\pi \in \mathcal{A}^0_n(F)$ and $\rho \in \mathcal{A}^0_{n'}(K)$ with $\rho$ generic. Let $\psi_F$ be a nontrivial character of $F$. Then
    	
    	\begin{align}\label{formula::gamma-BC + AI}
    	\frac{\gamma (s, \pi \times \rho^{K/F},\psi_F)}{\prod_{\chiup \in \varXi} \gamma (s,\chi,\psi_F)^{nn'}}  =   \frac{\gamma (s,\pi_{K/F} \times \rho ,\psi_K)}{ \gamma (s,1_K,\psi_K)^{nn'}},
    	\end{align}
    	where $\varXi$ is the group of characters of $F^{\times}$ that is trivial on $\N_{K/F}(K^{\times})$ and $\psi_K := \psi_F \circ \mathrm{Tr}_{K/F}$.
    \end{prop}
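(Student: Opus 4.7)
The plan is to translate the assertion to the Galois side via the local Langlands correspondence, where base change becomes restriction and automorphic induction becomes induction, and then to reduce to a direct application of the inductive formula \eqref{formula::gamma--inductive}. Set $\sigma := \llc_n^F(\pi) \in \mathcal{G}^0_n(F)$ and $\tau := \llc_{n'}^K(\rho) \in \mathcal{G}^0_{n'}(K)$. By the compatibility of the gamma factor with $\llc$, together with \eqref{formula::automorphic induction} and \eqref{formula::base change and LLC}, one has
\begin{align*}
\gamma(s,\pi \times \rho^{K/F},\psi_F) = \gamma\bigl(s,\sigma \otimes \ind_{K/F}\tau, \psi_F\bigr), \qquad \gamma(s,\pi_{K/F} \times \rho,\psi_K) = \gamma\bigl(s,\res_{K/F}\sigma \otimes \tau, \psi_K\bigr).
\end{align*}

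The heart of the proof is to apply the projection formula for Weil groups,
\begin{align*}
\sigma \otimes \ind_{K/F}\tau \;\cong\; \ind_{K/F}\bigl(\res_{K/F}\sigma \otimes \tau\bigr),
\end{align*}
which is standard since $\weil_K$ is open of finite index in $\weil_F$. The right-hand side is an induced representation of dimension $nn'$, so \eqref{formula::gamma--inductive} (applied with $n$ there replaced by $nn'$) gives
\begin{align*}
\gamma\bigl(s,\sigma \otimes \ind_{K/F}\tau,\psi_F\bigr) \;=\; \gamma\bigl(s,\res_{K/F}\sigma \otimes \tau, \psi_K\bigr) \cdot \left(\frac{\gamma(s,\ind_{K/F}1_K,\psi_F)}{\gamma(s,1_K,\psi_K)}\right)^{nn'}.
\end{align*}

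It remains to identify $\ind_{K/F}1_K$. Since $K/F$ is cyclic, $\ind_{K/F}1_K$ factors through the quotient $\weil_F \twoheadrightarrow \Gal(K/F)$, hence decomposes as the direct sum of its characters; via the Artin reciprocity map $\boldsymbol{a}_F$, these are precisely the characters of $F^\times$ trivial on $\N_{K/F}(K^\times)$, i.e.\ the elements of $\varXi$. Thus $\ind_{K/F}1_K \cong \bigoplus_{\chi \in \varXi} \chi$, and \eqref{formula::gamma--additive} yields $\gamma(s,\ind_{K/F}1_K,\psi_F) = \prod_{\chi \in \varXi} \gamma(s,\chi,\psi_F)$. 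Rearranging gives the desired identity \eqref{formula::gamma-BC + AI}.

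There is no serious obstacle, since every ingredient—compatibility of $\llc$ with gamma factors, the projection formula, the inductive and additive properties of $\gamma$, and the class field theoretic description of $\ind_{K/F}1_K$—is already available; the argument is essentially an assembly of these facts. The only point worth checking carefully is that the exponent in \eqref{formula::gamma--inductive} must be taken as $nn' = \dim(\res_{K/F}\sigma \otimes \tau)$, which is what produces the $nn'$-th powers on both sides of \eqref{formula::gamma-BC + AI}.
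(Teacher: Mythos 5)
Your proof is correct and follows essentially the same route as the paper: translate to the Galois side via $\llc$ and the base-change/automorphic-induction compatibilities, apply the projection formula $\sigma \otimes \ind_{K/F}\tau \cong \ind_{K/F}(\res_{K/F}\sigma \otimes \tau)$, invoke the inductive formula \eqref{formula::gamma--inductive} with exponent $nn'$, and identify $\ind_{K/F}1_K$ with $\bigoplus_{\chi\in\varXi}\chi$ via class field theory. The only cosmetic difference is that the paper phrases the last step as the general fact $\ind_H^G 1_H = \oplus_{\chi\in\widehat{G/H}}\chi$ for a normal open subgroup with abelian quotient, whereas you specialize directly to the cyclic case.
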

    
    \begin{proof}
    	We omit the subscripts in the $\llc_n^F$.
    	\begin{align}
    	\gamma (s,\pi \times\rho^{K/F},\psi_F) & = \gamma (s,\llc(\pi)\otimes \llc(\rho^{K/F}),\psi_F)   \\
    	&=\gamma(s,\llc(\pi)\otimes \ind_{K/F} \llc(\rho),\psi_F) \nonumber \\
    	&=\gamma (s,\ind_{K/F}(\mathrm{Res}_{K/F}\llc(\pi) \otimes \llc(\rho )),\psi_F). \nonumber
    	\end{align}
    	In view of \eqref{formula::gamma--inductive}, we have
    	\begin{align}
    	&\gamma (s,\ind_{K/F}(\mathrm{Res}_{K/F}\llc(\pi) \otimes \llc(\rho )),\psi_F)   \nonumber \\&=   \gamma(s,\mathrm{Res}_{K/F}\llc(\pi) \otimes \llc(\rho),\psi_K) \cdot \frac{\gamma (s,\ind_{K/F}1_K,\psi_F)^{nn'} }{\gamma(s, 1_K,\psi_K)^{nn'}} \\
    	&=\gamma (s,\pi_{K/F} \times \rho ,\psi_K)\frac{\gamma (s,\ind_{K/F}1_K,\psi_F)^{nn'} }{\gamma(s, 1_K,\psi_K)^{nn'}}.\nonumber
    	\end{align}
    	Note that, by local class field theory, $\varXi$ corresponds to the group of characters of $\weil_F$ that is trivial on $\weil_K$. The equality \eqref{formula::gamma-BC + AI} follows then from the local Langlands correspondence \eqref{formula::LLC-epsion} and the simple fact that
    	\begin{align*}
    	\ind_H^G \shskip 1_H = \mathop{\oplus}_{\chi \in \widehat{G/H} }\shskip \shskip \chi \shskip,
    	\end{align*}
    	where $H$ is a normal subgroup of $G$ of finite index such that $G/H$ is abelian, $1_H$ is the trivial representation of $H$ and $\widehat{G/H}$ is the group of characters of $G/H$.
    \end{proof}
    \begin{rem}
    	Strictly speaking, this is not really a proof, as the formula \eqref{formula::gamma-BC + AI} predates and is used to establish the local Langlands correspondence for $\GL_n(F)$, while we use the Langlands correspondence in the ``proof''. The original proof in \cite{B-H-K-LLC+Conductor} involves a global method.
    \end{rem}
    
    \section{Gamma factors over finite fields}\label{section::gamma factor-finite field}
    
    \subsection{Baisc facts}
    In her thesis \cite{Roditty}, Roditty considered a finite field analogue of Rankin-Selberg convolution in the same way as in \cite{J-PS-S-Rankin+Selberg}. There she defined the gamma factor (a complex number in this case, instead of a function!) as the ratio appeared in certain functional equations. 
    
    We fix a nontrivial character $\psi$ of $\BF_q$ in this subsection. Let $\mathrm{U}_n(\BF_q)$ be the group of upper triangular unipotent matrices in $\GL_n(\BF_q)$. Let 
    \begin{align*}
    \psi_n(u) = \psi (\sum\limits_{i=1}^{n-1} u_{i,\shskip i+1}),\quad \text{for }u=(u_{ij})\in \mathrm{U}_n(\BF_q).
    \end{align*}
    If $\pi$ is a generic representation of $\GL_n(\BF_q)$, denote by $\mathcal{W}(\pi,\psi_n)$ the Whittaker model of $\pi$ with respect to $\psi_n$. We refer to \cite{Nien-Jacquet+conjecture} for the undefined notations in this subsection.
    
    \begin{thm}[\cite{Roditty}, Theorem 5.1, 5.4 or \cite{Nien-Jacquet+conjecture}, Theorem 2.10]
    	Let $\pi$ be an irreducible cuspidal representation of $\GL_n(\BF_q)$ and $\tau$ an irreducible generic representation of $\GL_m(\BF_q)$, with $n > m$. Then there exists a complex number $\gamma (\pi \times \tau,\psi)$ such that
    	\begin{align*}
    	& \gamma(\pi \times \tau,\psi) q^{km} \sum_{ g \in \mathrm{U}_m \backslash \GL_m(\BF_q)}\sum_{x \in \mathrm{M}_{n-m-1-k,m}} W_{\pi}\big( \left( \begin{matrix}
    		 g & 0 & 0 \\ x & I_{n-m-1-k} & 0 \\ 0 & 0& I_{k+1}
    		\end{matrix} \right) \big) W_{\tau} ( g )  \\
    	&=    \sum_{ g \in \mathrm{U}_m \backslash \GL(\BF_q)} \sum_{y \in \mathrm{M}_{m,k}} W_{\pi} \big( \left( \begin{matrix}
    		0 & I_{n-m-k} & 0 \\ 0 & 0 & I_k \\ g & 0 & y
    		\end{matrix} \right)  \big) W_{\tau} ( g ),	
    	\end{align*}
    	for all $0 \leq k \leq n-m-1$, $W_{\pi} \in \mathcal{W}(\pi, \psi_n)$ and $W_{\tau} \in \mathcal{W}(\tau, \psi_m^{-1})$.
    \end{thm}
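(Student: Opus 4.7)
The strategy is to adapt the Rankin--Selberg method of Jacquet--Piatetski-Shapiro--Shalika to the finite field setting, replacing $p$-adic integrals by finite sums and meromorphic continuation by equality of scalars. For each $0 \leq k \leq n-m-1$, write $\Psi_k(W_\pi, W_\tau)$ and $\widetilde{\Psi}_k(W_\pi, W_\tau)$ for the left-hand and right-hand sums appearing in the statement, viewed as bilinear forms on $\mathcal{W}(\pi, \psi_n) \times \mathcal{W}(\tau, \psi_m^{-1})$. A direct change of variables, using the equivariance of $W_\pi$ under the unipotent radical of the standard $(m, n-m)$-parabolic and of $W_\tau$ under $\mathrm{U}_m(\BF_q)$, shows that both bilinear forms lie in the space
\begin{align*}
\mathrm{Hom}_{\GL_m(\BF_q)}\bigl(\mathcal{W}(\pi,\psi_n)\otimes \mathcal{W}(\tau,\psi_m^{-1}),\ \BC \bigr),
\end{align*}
where $\GL_m(\BF_q)$ acts on $\mathcal{W}(\pi,\psi_n)$ by right translation through the embedding $h \mapsto \mathrm{diag}(h, I_{n-m})$ and on $\mathcal{W}(\tau,\psi_m^{-1})$ by right translation.

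The first main step is to prove that this Hom space is one-dimensional. This is the finite field analogue of the multiplicity one theorem for Rankin--Selberg zeta integrals. Since $\pi$ is cuspidal, the restriction of $\pi$ to the mirabolic subgroup $P_n(\BF_q)$ is isomorphic to the top piece of the Bernstein--Zelevinsky filtration, namely the representation compactly induced from the Whittaker character of the maximal unipotent subgroup. A Mackey / Frobenius reciprocity argument then reduces the space of such bilinear forms to a one-dimensional space of Whittaker functionals for $\tau$. Next I would verify that each of $\Psi_k$ and $\widetilde{\Psi}_k$ is a nonzero bilinear form; this can be done by plugging in a $W_\pi$ whose restriction to the mirabolic is the Bessel function of $\pi$ and a generating Whittaker function $W_\tau$. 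Combining these two facts yields a unique scalar $c_k \in \BC^\times$ satisfying $\widetilde{\Psi}_k = c_k \Psi_k$ identically on the Whittaker model.

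Finally, one must show that $c_k / q^{km}$ is independent of $k$, so that the common value may be christened $\gamma(\pi \times \tau, \psi)$. This should follow from a direct recursion: using the equivariance of $W_\pi$ along the row separating the block $I_{n-m-1-k}$ from $I_{k+1}$ together with a Fourier identity on the extra column of $x$, one obtains $\Psi_k = q^{-m}\Psi_{k-1}$, while the analogous manipulation on $\widetilde{\Psi}_k$ contributes no power of $q$. The factor $q^{km}$ in the statement absorbs the discrepancy precisely, so that a single $k$-independent scalar $\gamma(\pi \times \tau, \psi)$ governs every instance.

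The main obstacle is the one-dimensionality of the equivariant Hom space; the cuspidality of $\pi$ is crucial, since it guarantees that $\pi|_{P_n(\BF_q)}$ is a single Bernstein--Zelevinsky cell and hence that all unipotent orbit integrals beyond the top one vanish. Once uniqueness is in place, the existence, well-definedness, and $k$-independence of $\gamma(\pi \times \tau,\psi)$ all follow formally from the two steps above.
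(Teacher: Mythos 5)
This theorem is cited by the paper from Roditty's thesis and Nien's paper rather than proved, so I am comparing your sketch to the standard arguments in those references. Your overall strategy — realize both sides as bilinear forms, invoke a uniqueness principle to get a proportionality constant, then show $k$-independence by a recursion — is attractive, but the uniqueness step as you state it is false, and that is where the proposal breaks.

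You claim that $\mathrm{Hom}_{\GL_m(\BF_q)}\bigl(\mathcal{W}(\pi,\psi_n)\otimes\mathcal{W}(\tau,\psi_m^{-1}),\BC\bigr)$ is one-dimensional, i.e.\ that $\mathrm{Hom}_{\GL_m}(\pi|_{\GL_m},\tau^\vee)$ is one-dimensional, with $\GL_m$ sitting in the top left corner of $\GL_n$. This fails already for $n=3$, $m=1$: the restriction of a cuspidal $\pi$ of $\GL_3(\BF_q)$ to $\BF_q^\times=\mathrm{diag}(a,1,1)$ contains every character of $\BF_q^\times$ with multiplicity on the order of $q^2$, not $1$. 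The multiplicity-one principle that does hold (and is used implicitly in Roditty and Nien) concerns equivariance under a strictly larger group, namely $\GL_m$ together with the appropriate unipotent radical inside the mirabolic $P_n$, and even then one must check that both bilinear forms enjoy the extra unipotent equivariance. That is far from automatic for $\widetilde{\Psi}_k$: the matrices $\left(\begin{smallmatrix}0&I&0\\0&0&I\\ g&0&y\end{smallmatrix}\right)$ do not lie in $P_n$, so $\widetilde{\Psi}_k$ does not visibly factor through the Kirillov model of $\pi$ in the way your Bernstein--Zelevinsky/Mackey argument requires. Your recursion step also needs real work: the claim that $\widetilde{\Psi}_k$ ``contributes no power of $q$'' when passing from $k-1$ to $k$ ignores that the $y$-sum grows by a full $q^m$ factor, and there is no argument offered for why that factor collapses. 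In the cited sources the theorem is proved by a direct computation that unfolds both sums, exploits the Whittaker equivariance of $W_\pi$ and $W_\tau$ to collapse the $x$- and $y$-sums simultaneously, and obtains the functional equation and the power $q^{km}$ for all $k$ at once; no multiplicity-one input for $\mathrm{Hom}_{\GL_m}$ is used. To repair your approach you would need to (i) identify the correct group $H\supsetneq\GL_m$ for which the relevant Hom space is one-dimensional, (ii) verify that $\widetilde{\Psi}_k$ is $H$-equivariant despite living outside the mirabolic, and (iii) actually carry out the Fourier manipulation proving the $k$-recursion with the stated powers of $q$.
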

    
    There is a distinguished element in the Whittaker model that can be used to compute the gamma factors.
    \begin{prop}[\cite{Gelfand-finite}, Proposition 4.5]\label{prop::Bessel}
    	Let $\pi$ be an irreducible generic representation of $\GL_n(\BF_q)$ with $\chi_{\pi}$ its (trace) character. The functon
    	\begin{align}\label{formula::Bessel}
    	J_{\pi,\psi_n} (g) = |\mathrm{U}_n(\BF_q)|^{-1} \sum_{ u \in \mathrm{U}_n(\BF_q)} \chi_{\pi}(gu) \psi_n(u^{-1}) 
    	\end{align}
    	lies in $\mathcal{W}(\pi,\psi_n)$ and satisfies
    	\begin{align*}
    	J_{\pi,\psi_n}(u_1 g u_2) = \psi_n(u_1u_2)J_{\pi,\psi_n}(g),\ u_i \in \mathrm{U}_n(\BF_q) \quad \text{and}\quad J_{\pi,\psi_n}(I_n)=1.
    	\end{align*}
    \end{prop}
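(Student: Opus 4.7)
The plan is to identify $J_{\pi,\psi_n}$ with a matrix coefficient built from the Whittaker projector. Introduce the averaging operator
\begin{align*}
P_\psi = |\mathrm{U}_n(\BF_q)|^{-1} \sum_{u \in \mathrm{U}_n(\BF_q)} \psi_n(u^{-1}) \pi(u)
\end{align*}
acting on the representation space $V_\pi$ of $\pi$. A direct change of variables $w=uv$ in the double sum defining $P_\psi^2$ yields $P_\psi^2 = P_\psi$, and the same substitution applied to the product $\pi(u) P_\psi$ gives the equivariance $\pi(u) P_\psi = \psi_n(u) P_\psi$ for every $u \in \mathrm{U}_n(\BF_q)$. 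Thus $P_\psi$ is an idempotent whose image is contained in the $(\mathrm{U}_n(\BF_q),\psi_n)$-isotypic line of $V_\pi$.

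The next step is to invoke multiplicity one for Whittaker models of irreducible generic representations of $\GL_n(\BF_q)$, namely $\dim \mathrm{Hom}_{\mathrm{U}_n(\BF_q)}(\pi,\psi_n)=1$. Combined with the genericity of $\pi$, this forces the image of $P_\psi$ to be exactly one-dimensional, so $P_\psi$ is a rank-one idempotent and factors as $P_\psi = e_\pi \otimes \Lambda$ for some non-zero Whittaker functional $\Lambda$ and some $e_\pi \in V_\pi$ with $\Lambda(e_\pi)=1$ (automatic from idempotence together with $P_\psi\neq 0$).

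Writing $\chi_\pi(gu) = \Tr(\pi(g)\pi(u))$ and interchanging trace and summation then gives
\begin{align*}
J_{\pi,\psi_n}(g) = \Tr(\pi(g) P_\psi) = \Lambda(\pi(g) e_\pi),
\end{align*}
which exhibits $J_{\pi,\psi_n}$ as an element of $\mathcal{W}(\pi,\psi_n)$. The transformation law $J_{\pi,\psi_n}(u_1 g u_2) = \psi_n(u_1 u_2) J_{\pi,\psi_n}(g)$ is then immediate: the factor $\psi_n(u_1)$ comes from the Whittaker property of $\Lambda$, while the factor $\psi_n(u_2)$ comes from $\pi(u_2) e_\pi = \psi_n(u_2) e_\pi$, which holds because $e_\pi$ lies in the image of $P_\psi$ (by the equivariance established above). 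The normalization $J_{\pi,\psi_n}(I_n) = \Tr(P_\psi) = \Lambda(e_\pi) = 1$ then follows automatically.

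The main technical input is the finite-field multiplicity one statement for Whittaker models, which is precisely what upgrades the inclusion ``$\mathrm{Image}(P_\psi) \subset$ Whittaker line'' to an equality of rank one and thereby yields the clean factorization $P_\psi = e_\pi \otimes \Lambda$. This is a classical result (essentially due to Gelfand--Graev, proved via a Bruhat-type involution argument on $(\mathrm{U}_n(\BF_q),\psi_n)$-biinvariants); once it is granted, every remaining step reduces to elementary manipulation in the group algebra of $\mathrm{U}_n(\BF_q)$.
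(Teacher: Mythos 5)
The paper does not prove this proposition; it is cited verbatim from Gelfand's paper on representations of $\GL_n$ over a finite field, so there is no in-text argument to compare against. Your proof via the averaging idempotent $P_\psi$ is the standard one and is essentially correct, but it has two small gaps worth filling. First, you only establish the left equivariance $\pi(u)P_\psi = \psi_n(u)P_\psi$; however, in order for the $\Lambda$ appearing in the factorization $P_\psi = e_\pi \otimes \Lambda$ to be a Whittaker functional (and hence for the factor $\psi_n(u_1)$ to appear in the transformation law), you also need the right equivariance $P_\psi\pi(u) = \psi_n(u)P_\psi$. This follows by the same change of variables, or from self-adjointness of $P_\psi$ with respect to an invariant Hermitian form, but it should be stated. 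Second, the passage from ``$\mathrm{Image}(P_\psi)$ is contained in a one-dimensional space'' to ``$\mathrm{Image}(P_\psi)$ is exactly one-dimensional'' needs the observation that $P_\psi$ is not zero. The clean way to see this is to note that $P_\psi$ acts as the identity on the $(\mathrm{U}_n(\BF_q),\psi_n)$-isotypic subspace $V_\pi^{\mathrm{U}_n,\psi_n}$: if $\pi(u)v = \psi_n(u)v$ for all $u$, then $P_\psi v = v$ directly from the definition. Combined with genericity, which says $V_\pi^{\mathrm{U}_n,\psi_n}\neq 0$, this shows $P_\psi$ is the rank-one projection onto that line, and then everything you wrote goes through, including $\Tr(P_\psi)=1$. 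With these two sentences added, the argument is complete.
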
  
    The function $J_{\pi,\psi_n}$ is called the (normalized) Bessel function of $\pi$ with respect to $\psi_n$. In terms of Bessel functions, we have the following
    \begin{prop}[\cite{Roditty}, Lemma 6.1.4] \label{prop::gamma-Bessel}
    	Let $\pi$ be an irreducible cuspidal representation of $\GL_n(\BF_q)$ and $\tau$ an irreducible generic representation of $\GL_m(\BF_q)$, $n>m$. Then
    	\begin{align}\label{formula::gamma-Bessel}
    	\gamma(\pi\times \tau, \psi)  =  \sum_{g \in \mathrm{U}_m \backslash \GL_m(\BF_q)} J_{\pi,\psi_n}\big( \left(  \begin{matrix}
    	0 & I_{n-m}  \\ g & 0
    	\end{matrix}\right) \big) J_{\tau, \psi_m^{-1} } (g).
    	\end{align}
    \end{prop}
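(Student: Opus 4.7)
The plan is to substitute the Bessel functions of $\pi$ and $\tau$ directly into the functional equation from the preceding theorem. Specifically, I apply that theorem with the choice $k=0$, $W_\pi = J_{\pi,\psi_n}$, and $W_\tau = J_{\tau,\psi_m^{-1}}$. Since $\mathrm{M}_{m,0}$ is trivial, the right-hand side of the functional equation collapses to
\begin{align*}
\sum_{g \in \mathrm{U}_m\backslash\GL_m(\BF_q)} J_{\pi,\psi_n}\!\left(\begin{pmatrix} 0 & I_{n-m} \\ g & 0 \end{pmatrix}\right) J_{\tau,\psi_m^{-1}}(g),
\end{align*}
which is precisely the expression on the right of \eqref{formula::gamma-Bessel}. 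Since $q^{km}=1$ for $k=0$, the task thus reduces to showing that the corresponding left-hand sum equals $1$; that is,
\begin{align*}
S := \sum_{g\in \mathrm{U}_m\backslash\GL_m(\BF_q)}\ \sum_{x \in \mathrm{M}_{n-m-1,m}(\BF_q)} J_{\pi,\psi_n}\!\left(\begin{pmatrix} g & 0 & 0 \\ x & I_{n-m-1} & 0 \\ 0 & 0 & 1 \end{pmatrix}\right) J_{\tau,\psi_m^{-1}}(g) = 1.
\end{align*}

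To establish $S=1$, I would combine the normalizations $J_{\pi,\psi_n}(I_n) = J_{\tau,\psi_m^{-1}}(I_m) = 1$ from Proposition \ref{prop::Bessel} with a vanishing property of $J_{\pi,\psi_n}$ arising from the cuspidality of $\pi$. Writing $J_{\pi,\psi_n}$ as the matrix coefficient \eqref{formula::Bessel}, cuspidality implies $\sum_{v \in U'}\pi(v)=0$ on $\pi$ for $U'$ the unipotent radical of any proper parabolic subgroup of $\GL_n(\BF_q)$. Combined with the quasi-invariance $J_{\pi,\psi_n}(\,\cdot\, u) = \psi_n(u) J_{\pi,\psi_n}(\,\cdot\,)$ for $u \in \mathrm{U}_n(\BF_q)$, a Fourier-type averaging in the variable $x$ (and then in $g$) should eliminate every contribution except the term $g = I_m$, $x = 0$, which evaluates to $1 \cdot 1 = 1$.

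The main obstacle is carrying out this averaging cleanly. The lower-unipotent family $\{L(x) : x \in \mathrm{M}_{n-m-1,m}\}$ does not coincide with the unipotent radical of a standard parabolic, so one has to piece together the cuspidality vanishing over \emph{several} opposite parabolic radicals while simultaneously tracking how right multiplication by $\mathrm{U}_n(\BF_q)$ adjusts the argument of $J_{\pi,\psi_n}$. An efficient route is to introduce, one by one, unipotent variables that sweep out the missing entries of the lower-left block, apply the quasi-invariance to convert each sum into a character sum, and invoke cuspidality to kill the nontrivial Fourier modes, ultimately isolating the single contribution $g = I_m$, $x=0$. Once $S=1$ is established, the desired formula \eqref{formula::gamma-Bessel} follows immediately from the functional equation.
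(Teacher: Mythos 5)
This proposition is quoted by the paper from Roditty's thesis (Lemma~6.1.4) without proof, so there is no in-paper argument to compare against. Your overall strategy is the standard one: plug the Bessel functions into the functional equation at the boundary value $k=0$, observe that the $y$-sum collapses to the single empty matrix and that $q^{km}=1$, so the whole matter reduces to showing that the "$\pi$-side" sum $S$ equals~$1$. That reduction is set up correctly.

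The genuine gap is the argument that $S=1$, which you explicitly flag as the main obstacle and only sketch. The fact you need is that, for cuspidal $\pi$, the normalized Bessel function $J_{\pi,\psi_n}$ vanishes on the mirabolic subgroup outside $\mathrm{U}_n(\BF_q)$; equivalently, the block matrix
$\left(\begin{smallmatrix} g & 0 & 0 \\ x & I_{n-m-1} & 0 \\ 0 & 0 & 1 \end{smallmatrix}\right)$
is killed by $J_{\pi,\psi_n}$ unless $g\in \mathrm{U}_m$ and $x=0$, in which case its value $\psi_m(g)$ cancels against $J_{\tau,\psi_m^{-1}}(g)=\psi_m^{-1}(g)$ and only the coset of the identity survives. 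Your proposed averaging does not establish this as written: the family $\{L(x)\}$ of lower-block unipotents is only a proper subgroup of the unipotent radical of the opposite $(m,n-m)$-parabolic, so the Jacquet-module vanishing $\sum_{v\in N}\pi(v)=0$ does not directly apply to the $x$-sum, and the quasi-invariance of $J_{\pi,\psi_n}$ concerns left/right multiplication by the \emph{upper} unipotent $\mathrm{U}_n(\BF_q)$, which does not by itself sweep out the lower-left block. The correct route is to invoke the fact that a cuspidal $\pi$ restricted to the mirabolic $P_n$ is the (irreducible, for $P_n$) Gelfand--Graev representation $\mathrm{ind}_{\mathrm{U}_n}^{P_n}\psi_n$, whose Bessel function is $\psi_n$ extended by zero off $\mathrm{U}_n$; this gives the required support statement directly and makes $S=1$ immediate. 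Without citing or proving that support result, the proof is incomplete.
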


    \subsection{$n \times 1$ gamma factors}\label{section::n*1}
    We recall Green's parameterization \cite{Green} for irreducible cuspidal representations of $\GL_n(\BF_q)$. A character $\eta$ of $\BF_{q^n}^{\times}$ is called $\BF_q$-regular if the conjugates $\eta^s$, $s\in \Gal(\BF_{q^n}/\BF_q)$ are distinct. Two characters $\eta_1$ and $\eta_2$ are called equivalent if $\eta_1 = \eta_2^{q^j}$ for some integer $j$, i.e., they are in the same Frobenius orbit. 
    
    \begin{rem}
    	A character $\eta$ is $\BF_q$-regular if and only if $\eta$ cannot factor through $\N_{\BF_{q^n} / \BF_{q^d}}$ for some subextension $\BF_{q^d}/\BF_q$.
    \end{rem}
   
     Let $\Lambda_n(\BF_q)$ denote the set of equivalence classes of $\BF_q$-regular characters of $\BF_{q^n}^{\times}$. Let $\mathcal{A}_n^0(\BF_q)$ denote the set of isomorphism classes of irreducible representations of $\GL_n(\BF_q)$.  Green's parameterization gives a bijection
    \begin{align}\label{diagram::Green}
     \mathcal{A}_n^0(\BF_q) & \longleftrightarrow \Lambda_n(\BF_q)  \\
      \pi  & \longleftrightarrow  \eta_{\pi}  \nonumber
    \end{align}
    The (trace) character $\chi_{\pi}$ of $\pi$ is connected with $\eta_{\pi}$ in an explicit way (see \cite[\S 6]{Gelfand-finite}). So, by \eqref{formula::Bessel}, the Bessel function $J_{\pi,\psi_n}$ can be expressed in terms of $\eta_{\pi}$. 
    
    In view of \eqref{formula::gamma-Bessel}, Nien computes the $n\times 1$ gamma factor as an ``abelian Gauss sum" using special values of Bessel functions. Denote by $\widehat{\BF_q^{\times}}$ the set of characters of $\BF_q^{\times}$.
    \begin{prop}[\cite{Nien-n*1}, Theorem 1.1]
    	Let $\pi$ be an irreducible cuspidal representation of $\GL_n(\BF_q)$, $n \geq 2$ and $\tau \in \widehat{\BF_q^{\times}}$. Then
    	\begin{align}\label{formula::Nien n*1}
    	\gamma ( \pi \times \tau ,\psi) = (-q^{-1} \tau (-1))^{n-1} G(\eta_{\pi}\cdot \tau\circ \N_{n:1},\psi).
    	\end{align}
    	where $\eta_{\pi}$ is the regular character of $\BF_{q^n}^{\times}$ which corresponds to $\pi$ by Green's parameterization; the Gauss sum $G$ is as defined in \eqref{defn::Gauss sum}.
    \end{prop}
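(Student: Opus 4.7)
The plan is to apply the Bessel-function formula \eqref{formula::gamma-Bessel} in the special case $m=1$ and then evaluate the Bessel values at the anti-diagonal elements using Green's explicit formula for cuspidal characters. Specialising \eqref{formula::gamma-Bessel} to $m=1$, we have $\mathrm{U}_1(\BF_q)=\{1\}$ and $J_{\tau,\psi_1^{-1}}(g)=\tau(g)$ because $\tau$ is one-dimensional, whence
\begin{align*}
\gamma(\pi\times \tau,\psi) = \sum_{g\in \BF_q^\times} J_{\pi,\psi_n}(w_g)\,\tau(g), \qquad w_g := \pmtrix{0}{I_{n-1}}{g}{0}.
\end{align*}
A direct computation shows that $w_g u$, for $u\in \mathrm{U}_n(\BF_q)$, has characteristic polynomial $X^n-c_{n-1}X^{n-1}-\cdots-c_1 X-g$, with constant term $-g$ independent of $u$; the whole problem therefore reduces to an explicit evaluation of $J_{\pi,\psi_n}(w_g)$.

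For that evaluation I would insert \eqref{formula::Bessel} and reorganise the sum $\sum_{u\in \mathrm{U}_n(\BF_q)}\chi_\pi(w_g u)\psi_n(u^{-1})$ according to the characteristic polynomial of $w_g u$. Since $\pi$ is cuspidal, $\chi_\pi$ is supported on regular elliptic classes; on such a class with roots $\{\alpha^{q^s}\}_{s=0}^{n-1}\subset\BF_{q^n}^\times$, Green's formula (see \cite[\S 6]{Gelfand-finite}) gives
\begin{align*}
\chi_\pi(w_gu)=(-1)^{n-1}\sum_{s=0}^{n-1}\eta_\pi(\alpha^{q^s}).
\end{align*}
Changing variables from the unipotents $u$ (parameterised by their strictly upper-triangular entries, equivalently by the coefficients $c_1,\ldots,c_{n-1}$) to the elliptic eigenvalue $\alpha\in \BF_{q^n}^\times$ subject to $\N_{n:1}(\alpha)=(-1)^{n-1}g$, and matching $\psi_n(u^{-1})$ with $\psi(\Tr_{n:1}(\alpha^{-1}))$ via the companion-matrix form of the conjugacy class, should produce
\begin{align*}
J_{\pi,\psi_n}(w_g) = (-q^{-1})^{n-1}\sum_{\substack{\alpha\in \BF_{q^n}^\times \\ \N_{n:1}(\alpha)=(-1)^{n-1}g}}\eta_\pi(\alpha)\,\psi(\Tr_{n:1}(\alpha^{-1})).
\end{align*}

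Substituting this back into the expression for $\gamma(\pi\times\tau,\psi)$ and interchanging the order of summation, the outer sum over $g$ collapses via the constraint $g=(-1)^{n-1}\N_{n:1}(\alpha)$, and $\tau(g)$ becomes $\tau(-1)^{n-1}\,\tau(\N_{n:1}(\alpha))$; what remains is precisely the Gauss sum $G(\eta_\pi\cdot\tau\circ\N_{n:1},\psi)$ multiplied by the prefactor $(-q^{-1}\tau(-1))^{n-1}$, establishing \eqref{formula::Nien n*1}. The main obstacle is the second paragraph: identifying $\psi_n(u^{-1})$ with $\psi(\Tr_{n:1}(\alpha^{-1}))$ demands a careful passage between the coordinates on $\mathrm{U}_n(\BF_q)$ and the coefficients of the characteristic polynomial, and tracking the combinatorial factors -- $|\mathrm{U}_n(\BF_q)|^{-1}=q^{-n(n-1)/2}$, the Galois-orbit size $n$, and the fibre sizes -- into the clean prefactor $(-q^{-1})^{n-1}$ is the real technical content and is the heart of Nien's argument in \cite{Nien-n*1}.
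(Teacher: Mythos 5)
The paper offers no proof of this Proposition; it is quoted verbatim from Nien's paper \cite{Nien-n*1}, and the surrounding text makes no attempt to re-derive it. So what you have written should be judged as a reconstruction of Nien's argument rather than against a proof in this paper.

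Your outline is pointed in the right direction: specialising \eqref{formula::gamma-Bessel} to $m=1$ so that $J_{\tau,\psi_1^{-1}}(g)=\tau(g)$ is correct, the bookkeeping of the characteristic polynomial of $w_g u$ (constant term $-g$, hence $\N_{n:1}(\alpha)=(-1)^{n-1}g$) is correct, and the observation that the whole Proposition is, by Fourier inversion over $\widehat{\BF_q^\times}$, equivalent to the single identity
\begin{align*}
J_{\pi,\psi_n}(w_g) = (-q^{-1})^{n-1}\sum_{\substack{\alpha\in \BF_{q^n}^\times \\ \N_{n:1}(\alpha)=(-1)^{n-1}g}}\eta_\pi(\alpha)\,\psi(\Tr_{n:1}(\alpha^{-1}))
\end{align*}
is exactly right. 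This last identity is the real content, and your paragraph 2 is where the proof lives or dies.

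The gap is the sentence ``Since $\pi$ is cuspidal, $\chi_\pi$ is supported on regular elliptic classes.'' That is false. A cuspidal character of $\GL_n(\BF_q)$ is nonzero at the identity (its value there is $\dim\pi=\prod_{i=1}^{n-1}(q^i-1)$), at regular unipotent elements (value $(-1)^{n-1}$), and more generally on every element whose characteristic polynomial is a power $f^e$ of a single irreducible $f$ with $e\deg f=n$; Green's formula gives a nontrivial value there involving both $\eta_\pi|_{\BF_{q^{\deg f}}^\times}$ and Green polynomials for the unipotent part. What is true is that $\chi_\pi$ vanishes whenever the characteristic polynomial has two or more distinct irreducible factors; this does not by itself reduce the Gelfand--Graev sum \eqref{formula::Bessel} to a sum over $\BF_q$-regular $\alpha\in\BF_{q^n}^\times$. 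Moreover the map $u\mapsto w_gu$ from $\mathrm{U}_n(\BF_q)$ (dimension $n(n-1)/2$) to conjugacy classes with fixed constant term (dimension $n-1$) is far from a bijection once $n\geq 3$, so the ``change of variables from $u$ to $\alpha$'' has non-constant fibres and the elliptic but non-regular strata genuinely do appear. The missing step is precisely to show that, after weighting by $\psi_n(u^{-1})$ and summing over $u$, the non-regular contributions collapse to zero and the regular-elliptic fibres all contribute with the common weight $q^{(n-1)(n-2)/2}$. This cancellation is what produces the prefactor $|\mathrm{U}_n|^{-1}q^{(n-1)(n-2)/2}=q^{-(n-1)}$, and it is an honest argument (in the literature this computation of $J_{\pi,\psi_n}$ on anti-diagonal matrices goes back to Gelfand, Kondo and Curtis--Shinoda; Nien's contribution is essentially to import it and carry the $\tau$-twist through). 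As written, your proposal asserts the conclusion of that argument without providing it.
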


    \subsection{Connection with local gamma factors over $p$-adic fields}\label{section::gamma-finite+local}
    
    We recall a result of Nien and Zhang which shows that gamma factors over finite fields and gamma factors for level zero supercuspidal representations over $p$-adic fields are closely related.
    
    A representation $\pi \in \mathcal{A}^0_n(F)$ is called of level zero if it contains a nontrivial fixed vector for the subgroup $1+\mathfrak{p}\mathrm{M}_n(\mathfrak{o})$ of $\GL_n(\mathfrak{o})$. According to \cite[Theorem 8.4.1]{Bushnell-Kutzko}, every level zero supercuspidal representation is of the form 
    \begin{align}
    \pi \cong \cind_{F^{\times}\GL_n(\mathfrak{o})}^{\GL_n(F)}\shskip \chi 
    \sigma,
    \end{align}
    where $\sigma$ is a representation of $\GL_n(\mathfrak{o})$ that is inflated from an irreducible cuspidal representation $\widetilde{\sigma}$ of $\GL_n(\BF_q)=\GL_n(\mathfrak{o}/\mathfrak{p})$, $\chi$ is a character of $F^{\times}$ such that $\chi |_{U_F} $ equals to the central character of $\sigma$ and $\cind$ is the compact induction. 
    
    The character $\chi$ and the representation $\widetilde{\sigma}$ are uniquely determined by $\pi$. Let $\mathcal{A}_n^0(F)_0$ denote the set of isomorphism classes of level zero supercuspidal representations of $\GL_n(F)$. Theorem 8.4.1 in \cite{Bushnell-Kutzko} then gives a bijection 
    \begin{align}\label{diagram::level 0}
     \mathcal{A}_n^0(F)_0  &\longleftrightarrow \BC^{\times} \times \mathcal{A}_n^0(\BF_q)  \\
     \pi & \longleftrightarrow (\omega_{\pi}(\varpi),\widetilde{\sigma}).  \nonumber
    \end{align}
    
    \underline{Convention}: For the rest of this note, if $\pi \in \mathcal{A}_n^0(F)_0$, we shall always denote by $\widetilde{\pi}$ the second component of the image of $\pi$ under the bijection \eqref{diagram::level 0}.
    
     \begin{thm}[\cite{Nien-Zhang}, Theorem 3.11]\label{thm::Nien-Zhang}
     	Let $\pi \in \mathcal{A}^0_n(F)_0$ and $\tau \in \mathcal{A}^0_m(F)_0$, $n > m$. Suppose that $\psi$ has level one. Then 
     	\begin{align}\label{formula::Connection finite-local}
     	 \gamma (s , \pi \times \tau, \psi )  = \omega_{\tau}(-1)^{n-1}   q^{m(n-m-1)/2} \gamma (\widetilde{\pi} \times \widetilde{\tau}, \widetilde{\psi}),
     	\end{align}
     	where $\widetilde{\psi}$ is the character of $\kappa_F \cong \BF_q$ that is induced by $\psi$.
     \end{thm}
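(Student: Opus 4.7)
The plan is to reduce the $p$-adic Jacquet--Piatetski-Shapiro--Shalika (JPSS) functional equation for $\pi\times\tau$ to the finite-field functional equation of Proposition \ref{prop::gamma-Bessel} by feeding in Whittaker test vectors that lift the Bessel functions of $\widetilde\pi$ and $\widetilde\tau$. The level-zero structure of $\pi,\tau$ and the level-one hypothesis on $\psi$ are what make this reduction possible: the former lets us realize $\pi\cong\cind_{F^\times\GL_n(\mathfrak{o})}^{\GL_n(F)}\chi\sigma$ as in \eqref{diagram::level 0}, while the latter forces $\psi_n$ to be trivial on $\mathrm{U}_n(\mathfrak{p})$ and to reduce to the finite-field character $\widetilde\psi_n$ on $\mathrm{U}_n(\mathfrak{o})$.

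\textbf{Step 1: Bessel lift.} Using Frobenius reciprocity for compact induction together with the uniqueness of the Whittaker functional, construct $W_\pi\in\mathcal{W}(\pi,\psi_n)$ with the following properties: $W_\pi$ is supported on $\mathrm{U}_n(F)F^\times\GL_n(\mathfrak{o})$; it is $\chi$-equivariant under the center and transforms by $\psi_n$ under left translation by $\mathrm{U}_n(F)$; and $W_\pi(k)=J_{\widetilde\pi,\widetilde\psi_n}(\bar k)$ for every $k\in\GL_n(\mathfrak{o})$, where $\bar k$ denotes the reduction of $k$ modulo $\mathfrak{p}$. The support claim is the heart of the argument: Mackey's formula for $\cind_{F^\times\GL_n(\mathfrak{o})}^{\GL_n(F)}\chi\sigma$ restricted to $\mathrm{U}_n(F)$ decomposes the restriction according to double cosets $\mathrm{U}_n(F)\backslash\GL_n(F)/F^\times\GL_n(\mathfrak{o})$, and cuspidality of $\widetilde\pi$ together with the level-one condition on $\psi$ annihilates the $\psi_n$-isotypic component of every double coset other than the trivial one. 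Construct $W_\tau\in\mathcal{W}(\tau,\psi_m^{-1})$ analogously.

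\textbf{Step 2: Collapse to finite sums.} Plug $W_\pi,W_\tau$ into the family of JPSS zeta integrals
\begin{align*}
\Psi_k(s,W_\pi,W_\tau)=\int\!\!\int W_\pi\!\begin{pmatrix} g & 0 & 0 \\ x & I_{n-m-1-k} & 0 \\ 0 & 0 & I_{k+1}\end{pmatrix}\! W_\tau(g)\,|\det g|^{s-(n-m)/2}\,dx\,dg
\end{align*}
from \cite{J-PS-S-Rankin+Selberg}. The support property from Step 1 forces $g\in F^\times\GL_m(\mathfrak{o})$ and $x\in\mathrm{M}_{n-m-1-k,m}(\mathfrak{o})$, while compatibility of the central characters forces $|\det g|=1$ on the support. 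Hence each $\Psi_k$ is $s$-independent and, after normalizing the Haar measures so that $\mathfrak{o}$ and $\mathrm{U}_m(\mathfrak{o})\backslash\GL_m(\mathfrak{o})$ have the evident volumes, collapses to the exact finite-field sum appearing in Roditty's identity with parameter $k$. Picking $k=0$ on the ``right-hand'' side of the $p$-adic functional equation and $k=n-m-1$ on the ``left'' (after inserting the Weyl element $w_{n,m}$ and passing to contragredients) reproduces both sides of \eqref{formula::gamma-Bessel}. The factor $q^{m(n-m-1)/2}$ is precisely the ratio of the two Haar volumes $|\mathrm{M}_{n-m-1,m}(\mathfrak{o})|$ vs.\ $|\mathrm{M}_{0,m}(\mathfrak{o})|$, balanced against the $|\det|^{s-(n-m)/2}$ shift evaluated at the Weyl-element support. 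The sign $\omega_\tau(-1)^{n-1}$ is the Weyl-element sign built into the $p$-adic functional equation itself, and it survives the reduction mod $\mathfrak{p}$ unchanged because $\omega_\tau(-1)=\omega_{\widetilde\tau}(-1)$ for a level-zero $\tau$ (and the finite-field functional equation of Proposition \ref{prop::gamma-Bessel} carries no corresponding sign).

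\textbf{Main obstacle.} The delicate step is Step 1: constructing the Bessel-lifted Whittaker vector $W_\pi$ and pinning down its support. This needs a genuine Mackey analysis of $\cind_{F^\times\GL_n(\mathfrak{o})}^{\GL_n(F)}\chi\sigma|_{\mathrm{U}_n(F)}$, and cuspidality of $\widetilde\pi$ must be used decisively to kill all non-trivial double-coset contributions to the $\psi_n$-isotypic part; without this, the finite-sum collapse in Step 2 breaks down. Once Step 1 is secured, the rest is a reduction-mod-$\mathfrak{p}$ computation matching term-by-term against \eqref{formula::Bessel} and \eqref{formula::gamma-Bessel}, plus bookkeeping of $q$-powers and signs.
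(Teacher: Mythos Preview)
The paper does not prove this statement; it is quoted verbatim as Theorem 3.11 of \cite{Nien-Zhang} and used as a black box. So there is no in-paper proof to compare against.

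That said, your outline is the natural route and is essentially the one carried out in \cite{Nien-Zhang}: lift the finite-field Bessel functions $J_{\widetilde\pi,\widetilde\psi_n}$, $J_{\widetilde\tau,\widetilde\psi_m^{-1}}$ to explicit Whittaker vectors for the compactly induced representations, establish that these lifts are supported on $\mathrm{U}_n(F)\,F^\times\GL_n(\mathfrak{o})$ (resp.\ the $m$-analogue), and then observe that the JPSS zeta integrals become finite sums reproducing Roditty's identity. Your identification of Step~1 as the crux is correct.

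One caution on Step~1: the support statement is not literally a Mackey decomposition of $\cind|_{\mathrm{U}_n(F)}$. What one actually proves is a vanishing statement for the lifted Whittaker function $W_\pi$ on Iwasawa cells $\mathrm{U}_n(F)\,t\,\GL_n(\mathfrak{o})$ with $t$ a non-central diagonal; the mechanism is that on such a cell the character $\psi_n$ restricted to $\mathrm{U}_n(F)\cap t\GL_n(\mathfrak{o})t^{-1}$ forces a pairing against a Jacquet-module quotient of $\widetilde\pi$, which vanishes by cuspidality (this is the Pa\v{s}k\=unas--Stevens/Nien--Zhang argument). Your phrasing ``Mackey's formula \dots\ annihilates the $\psi_n$-isotypic component of every double coset other than the trivial one'' points in the right direction but conflates two different double-coset decompositions; make sure you are working with $\mathrm{U}_n(F)\backslash\GL_n(F)/\GL_n(\mathfrak{o})$ (Iwasawa/Cartan) rather than the Mackey cosets for the inducing subgroup. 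Once that is straightened out, your Steps~2--3 are routine bookkeeping, and the constants $\omega_\tau(-1)^{n-1}$ and $q^{m(n-m-1)/2}$ fall out exactly as you describe.
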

    

  \section{Explicit Local Langlands Correspondence for level zero supercuspidal representations}
  To determine the base change of a representation as indicated in Section \ref{section::BC+AI}, we need an explicit knowledge of the local Langlands correspondence. The explicit correspondence for level zero representations of $\GL_n(F)$ was worked out by Bushnell and Henniart in \cite{B-H-Explicit+level+0}. We recall it here.
    \subsection{Admissible tame pairs}
    Recall that a tame pair over $F$ consists of a finite unramified field extension $E/F$ and a character $\theta$ of $E^{\times}$ that is trivial on $U_E^1=1+\mathfrak{p}_E$. A tame pair $(E/F,\theta)$ is called admissible if the conjugates $\theta^s$, $s\in \Gal(E/F)$, are distinct. The following lemma is easy and can be found in \cite[\S 19.1]{B-H-GL2}.
  \begin{lem}\label{lemma::tame pair}
    Let $(E/F,\theta)$ be a tame pair. The following statements are equivalent:    
    \begin{enumerate}
    	\item[(i)] The pair $(E/F,\theta)$ is admissible; 
    	\item[(ii)] The restrictions $\theta^s |_{U_E}$, $s\in \Gal(E/F)$ are distinct;
    	\item[(iii)]  The character $\theta$ cannot factor through $\N_{E/K}$ for any subextension $K/F$.
    \end{enumerate}
  \end{lem}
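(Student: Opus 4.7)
The three conditions all encode a form of ``nontrivial dependence on the $\Gal(E/F)$-action'', so the strategy is to prove the cycle (i) $\Rightarrow$ (ii) $\Rightarrow$ (iii) $\Rightarrow$ (i). The equivalence (i) $\Leftrightarrow$ (ii) is essentially free: since $E/F$ is unramified, a uniformizer $\varpi$ of $F$ is also a uniformizer of $E$ and is fixed pointwise by $\Gal(E/F)$, so $\theta^s(\varpi)=\theta(\varpi)$ for every $s$. Combined with the direct sum decomposition $E^{\times}=\varpi^{\BZ}\times U_E$, this shows at once that two conjugates $\theta^{s_1}, \theta^{s_2}$ agree on $E^{\times}$ if and only if they agree on $U_E$, yielding both directions.

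For (ii) $\Rightarrow$ (iii) I would argue by contraposition: if $\theta=\chi\circ\N_{E/K}$ for some character $\chi$ of $K^{\times}$ and some subextension $F\subseteq K\subsetneq E$ (the case $K=E$ being excluded, as it would render (iii) vacuous), then $\N_{E/K}$, and hence $\theta$, is $\Gal(E/K)$-invariant. Restriction to $U_E$ gives $\theta^s|_{U_E}=\theta|_{U_E}$ for every $s\in\Gal(E/K)$, and any nontrivial such $s$ contradicts (ii).

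The main step is (iii) $\Rightarrow$ (i), again by contraposition. Assume $\theta^s=\theta$ for some nontrivial $s\in\Gal(E/F)$, let $H\subseteq\Gal(E/F)$ be the stabilizer of $\theta$ (a nontrivial subgroup), and put $K=E^H$, a proper subextension of $E/F$. Since $E/F$ is unramified, so is $E/K$, and in particular $E/K$ is cyclic; let $\sigma$ generate $\Gal(E/K)$. By Hilbert 90, $\ker(\N_{E/K})$ coincides with the image of $x\mapsto x/\sigma(x)$, and the $\Gal(E/K)$-invariance of $\theta$ gives $\theta(x/\sigma(x))=\theta(x)/\theta(\sigma(x))=1$. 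Thus $\theta$ is trivial on $\ker(\N_{E/K})$, descends to a character of $\N_{E/K}(E^{\times})\subseteq K^{\times}$, and extends (by divisibility of $\BC^{\times}$) to some character $\chi$ of $K^{\times}$ with $\theta=\chi\circ\N_{E/K}$, contradicting (iii). The only nontrivial ingredient in the whole argument is this final invocation of Hilbert 90 for the cyclic unramified extension $E/K$; everything else is bookkeeping with the splitting $E^{\times}=\varpi^{\BZ}\times U_E$ and with Galois invariance.
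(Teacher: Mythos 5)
The paper does not actually prove this lemma; it labels it ``easy'' and refers the reader to \cite[\S 19.1]{B-H-GL2}. So there is no in-paper argument to compare against, and your proof has to be judged on its own.

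Your argument is correct and complete. The (i) $\Leftrightarrow$ (ii) step correctly exploits the unramifiedness of $E/F$: a uniformizer $\varpi$ of $F$ remains a uniformizer of $E$, is $\Gal(E/F)$-fixed, and $E^\times = \varpi^{\BZ} \times U_E$, so conjugate characters of $E^\times$ coincide exactly when they coincide on $U_E$. The (ii) $\Rightarrow$ (iii) contrapositive is immediate from the $\Gal(E/K)$-invariance of $\N_{E/K}$. The heart is (iii) $\Rightarrow$ (i): if $\theta$ has nontrivial stabilizer $H \leq \Gal(E/F)$ with fixed field $K = E^H$, then $E/K$ is a cyclic (unramified) extension, Hilbert 90 identifies $\ker(\N_{E/K})$ with $\{x/\sigma(x)\}$, and the $H$-invariance of $\theta$ kills this kernel; the factorization then descends and extends by divisibility of $\BC^\times$. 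Two small points worth making explicit if this were to be written up: the extended $\chi$ is automatically smooth because $\theta$ is trivial on $U_E^1$ and $\N_{E/K}(U_E^1) \subseteq U_K^1$ is open; and the extension step is genuinely needed since $\N_{E/K}(E^\times)$ has index $[E:K]$ in $K^\times$. A mildly different route, closer in spirit to the paper's Remark in \S 3.2, would be to reduce via (i) $\Leftrightarrow$ (ii) to a statement about characters of the cyclic group $\kappa_E^\times \cong \BF_{q^n}^\times$ and count subgroups directly, avoiding Hilbert 90; your version is cleaner and more structural.
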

      
     Two admissible tame pairs $(E_i/F,\theta_i)$, $i=1,2$, are called $F$-isomorphic if there is an $F$-isomorphism $\alpha:E_1 \ra E_2$ such that $\theta_1 = \theta_2 \circ \alpha$. The degree of $(E/F,\theta)$ is the degree $[E:F]$ of the extension $E/F$. Denote by $\mathcal{T}_n(F)$ the $F$-isomorphism classes of admissible tame pairs of degree $n$.
       
     Let $(E/F,\theta)$ be a tame pair of degree $n$. As $\theta$ is trivial on $U^1_E$, it reduces to a character $\tilde{\theta}$ of $U_E/U^1_E \cong \kappa_E^{\times} \cong \BF_{q^n}^{\times}$. The pair $(E/F,\theta)$ is admissible if and only if $\tilde{\theta}$ is $\BF_q$-regular. 
     
     Recall that $\Lambda_n(\BF_q)$ is denoted as the set of equivalence classes of $\BF_q$-regular characters of $\BF_{q^n}^{\times}$. Then we have a bijection
     \begin{align}\label{diagram::tame pair}
     \mathcal{T}_n(F) & \longleftrightarrow \BC^{\times} \times \Lambda_n(\BF_q)    \\
     (E/F,\theta) & \longleftrightarrow (\theta(\varpi), \tilde{\theta}). \nonumber
     \end{align}

     \subsection{Parameterization of level zero representations}

     Firstly, composing \eqref{diagram::tame pair} with Green's parameterization \eqref{diagram::Green} and then the description of level zero supercuspidal representations \eqref{diagram::level 0}, we therefore have a bijective map 
      \begin{align}\label{diagram::level 0 -- tame pair}
     \boldsymbol{\pi}_n \colon \mathcal{T}_n(F) & \longrightarrow \mathcal{A}^0_n(F)_0 \\
      (E/F,\theta) & \longmapsto \boldsymbol{\pi}_n(\theta).  \nonumber 
       \end{align}
     
     Secondly, let $(E/F,\theta) \in \mathcal{T}_n(F)$. We view $\theta$ as a character of $\weil_E$ via Artin's reciprocity map $\boldsymbol{a}_E$, and form the smooth induced representation  
    \begin{align}\label{formula::defn-sigma-theta}
    \boldsymbol{\sigma}_n ( \theta ) = \ind_{\weil_E}^{\weil_F}\shskip\theta=\ind_{E/F}\theta.
    \end{align}
     
     Recall that $\sigma \in \mathcal{G}_n^0(F)$ is called of level zero if $\sigma$ is trivial on the wild inertia subgroup of $\weil_F$ (see \cite{B-H-Explicit+level+0}). Deonte by $\mathcal{G}^0_n(F)_0$ the subset of $\mathcal{G}^0_n(F)$ consisting of classes of representations of level zero. The following result is contained in \cite[A2,A3]{B-H-ETameLLC-I}.
     \begin{prop}
     	Let $(E/F,\theta)$ be an admissible tame pair of degree $n$. Then
     	
     	\begin{enumerate}
     		\item[(1)] The representation $\ind_{E/F} \theta$ is irreducible and of level zero. The equivalence class of $\ind_{E/F} \theta$ depends only on that of $(E/F,\theta)$;
     		\item[(2)] The map
     		\begin{align*}
     		\boldsymbol{\sigma}_n\colon \mathcal{T}_n(F) & \longrightarrow \mathcal{G}_n^0(F)_0    \\
     		(E/F,\theta) & \longmapsto \ind_{E/F} \theta
     		\end{align*}
     		is a bijection.
     	\end{enumerate}
     \end{prop}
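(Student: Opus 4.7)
The plan is to verify the three claims---irreducibility, level zero, and the bijectivity---essentially by combining Mackey/Clifford theory with the observation that because $E/F$ is unramified, $\weil_E$ is a normal subgroup of $\weil_F$ of finite cyclic quotient $\Gal(E/F)$, and the wild inertia subgroup $P_F$ of $\weil_F$ is contained in $\weil_E$ and agrees with $P_E$.

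For part (1), I would first show level zero: under Artin reciprocity $\boldsymbol{a}_E$, the subgroup $U_E^1$ corresponds to $P_E=P_F$, so $\theta$ trivial on $U_E^1$ means the character $\theta$ of $\weil_E$ is trivial on $P_F$; since $P_F$ is normal in $\weil_F$ and contained in $\weil_E$, the induced representation $\ind_{E/F}\theta$ is also trivial on $P_F$. For irreducibility, since $\weil_E$ is normal in $\weil_F$, Mackey's criterion collapses to the statement that $\ind_{E/F}\theta$ is irreducible if and only if the Galois conjugates $\theta^s$ for $s\in\Gal(E/F)$ are pairwise distinct; by Lemma \ref{lemma::tame pair} this is exactly admissibility. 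For well-definedness under $F$-isomorphism: any $F$-isomorphism $\alpha\colon E_1\to E_2$ of finite unramified extensions extends to an element of $\Gal(\bar F/F)$, which conjugates $\weil_{E_1}$ to $\weil_{E_2}$; transport of structure then gives $\ind_{E_1/F}\theta_1\cong \ind_{E_2/F}\theta_2$.

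For part (2), injectivity: if $\ind_{E_1/F}\theta_1\cong\ind_{E_2/F}\theta_2$, then by Frobenius reciprocity $\theta_1$ appears in the restriction of $\ind_{E_2/F}\theta_2$ to $\weil_{E_1}$, and Mackey's formula on the double cosets $\weil_{E_1}\backslash\weil_F/\weil_{E_2}$ forces $E_1=E_2$ (comparison of dimensions and of the restriction to inertia) and identifies $\theta_1$ with some Galois conjugate of $\theta_2$, i.e. the pairs are $F$-isomorphic.

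Surjectivity is the main obstacle, and I would handle it by Clifford theory applied to the inertia subgroup $I_F$. Take $\sigma\in\mathcal{G}_n^0(F)_0$. Since $\sigma$ is trivial on $P_F$ and $I_F/P_F$ is procyclic abelian, $\sigma|_{I_F}$ decomposes as a direct sum of characters; because $I_F$ is normal in $\weil_F$, the Frobenius permutes these characters, and by irreducibility of $\sigma$ this action has a single orbit with uniform multiplicity. Let $d$ be the size of this orbit and $\chi$ one of the characters; the stabilizer of $\chi$ in $\weil_F$ has index $d$ and, being an open subgroup containing $I_F$, equals $\weil_E$ for the unique unramified extension $E/F$ of degree $d$. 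Since $\weil_E/I_F$ is procyclic, $\chi$ extends to a character $\theta$ of $\weil_E$, and Clifford theory (together with uniqueness of the extension once we match $\sigma$) gives $\sigma\cong\ind_{E/F}\theta$; the uniform multiplicity forces this multiplicity to be $1$, so $n=d$. Triviality of $\theta$ on $U_E^1$ follows from $\chi$ being trivial on $P_F$ combined with a choice of extension trivial on the appropriate pro-$p$ part, and admissibility of $(E/F,\theta)$ follows from the fact that the $\Gal(E/F)$-orbit of $\chi=\tilde\theta|_{U_E}$ has exactly $d=n$ elements, i.e., the restrictions $\theta^s|_{U_E}$ are distinct, which by Lemma \ref{lemma::tame pair} is equivalent to admissibility. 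This produces the required preimage.
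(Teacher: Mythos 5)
The paper does not give its own proof of this proposition; it defers to \cite{B-H-ETameLLC-I}, A2--A3. Your reconstruction via Mackey theory for normal open subgroups and Clifford theory on the inertia subgroup is the standard route, and I believe it is essentially what the cited reference does.

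Two small points deserve tightening. First, since the paper's conventions fix $\bar F$ and make the unramified extension of each degree unique, an $F$-isomorphism of tame pairs of the same degree is simply an element of $\Gal(E/F)$; your injectivity step ``comparison of dimensions forces $E_1 = E_2$'' is correct but worth phrasing as an appeal to this uniqueness rather than to Mackey. Second, in the surjectivity step, the sentence ``the uniform multiplicity forces this multiplicity to be $1$'' inverts the logic. Uniform multiplicity is an output of Clifford theory, not a hypothesis that pins the multiplicity down. The actual reason the irreducible $\rho$ of $\weil_E$ lying over $\chi$ is one-dimensional is the Gallagher correspondence: once $\chi$ extends to a character $\theta_0$ of its stabilizer $\weil_E$, every irreducible of $\weil_E$ over $\chi$ has the form $\theta_0\otimes\mu$ with $\mu$ an irreducible representation of $\weil_E/I_F$; since $\weil_E/I_F\cong\mathbb{Z}$ is abelian, $\mu$ is a character, so $\rho$ is a character and hence the multiplicity $e=\dim\rho=1$ and $n=d$. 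You do invoke the procyclicity of $\weil_E/I_F$ to get the extension $\theta_0$, so the ingredient is there, but the conclusion $e=1$ should be attributed to it explicitly. With that repair the argument is complete; in particular your observations that $U_E^1$ corresponds to $P_E=P_F$ under $\boldsymbol{a}_E$ and that $\theta|_{P_F}=\chi|_{P_F}=1$ immediately give triviality on $U_E^1$ without any further choice, and the $d=n$-element orbit gives admissibility via your Lemma \ref{lemma::tame pair}.
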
 
     
     \subsection{Explicit correspondence for level zero supercuspidal representations}
     
     It turns out that $\boldsymbol{\sigma}_n \circ \boldsymbol{\pi}_n^{-1}$ is not the local Langlands correspondence for level zero supercuspidal representations. Some modifications have to be made.
     
     \begin{prop}[\cite{B-H-Explicit+level+0}, Theorem 2]
     Let $(E/F,\theta)$ be an admissible tame pair of degree $n$. Define $\Delta_E$ to be the unique unramified character of $E^{\times}$ of order $2$. Then 
     	\begin{align}\label{formula::explicit level 0}
     	\llc_n^F(\boldsymbol{\pi}_n(\theta)) = \boldsymbol{\sigma}_n (\Delta^{n-1}_E \shskip \theta).
     	\end{align}
     \end{prop}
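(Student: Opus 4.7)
The approach is to invoke Henniart's rigidity theorem: the local Langlands correspondence $\llc_n^F$ is uniquely characterized by its compatibility with class field theory on $\GL_1$, the matching of central characters, and the preservation of $\varepsilon$-factors for all twists by characters of $F^\times$. Both $\llc_n^F(\boldsymbol{\pi}_n(\theta))$ and $\boldsymbol{\sigma}_n(\Delta_E^{n-1}\theta)$ lie in $\mathcal{G}_n^0(F)_0$: the former because $\llc_n^F$ preserves Artin conductors and a level-zero supercuspidal of $\GL_n(F)$ has conductor $n$, the latter because multiplication by the unramified character $\Delta_E^{n-1}$ preserves the tame pair.

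I would first establish twist compatibility on both sides: the projection formula gives $\ind_{E/F}(\theta\cdot(\chi\circ\N_{E/F}))\cong\chi\otimes\ind_{E/F}(\theta)$ on the Galois side, and the compact-induction description yields the parallel identity $\chi\cdot\boldsymbol{\pi}_n(\theta)=\boldsymbol{\pi}_n(\theta\cdot(\chi\circ\N_{E/F}))$ on the automorphic side (a direct check from the data of the compact-induction model, respecting both the central character $\theta|_{F^\times}$ and the Green parameter $\widetilde\theta$). Consequently, the family of $\varepsilon$-factor identities $\varepsilon(s,\boldsymbol{\pi}_n(\theta)\otimes\chi,\psi)=\varepsilon(s,\boldsymbol{\sigma}_n(\Delta_E^{n-1}\theta)\otimes\chi,\psi)$, one for each character $\chi$ of $F^\times$, reduces, by absorbing $\chi$ into $\theta$, to the family of untwisted $\gamma$-factor identities
\begin{align*}
\gamma(s,\boldsymbol{\pi}_n(\theta),\psi) = \gamma(s,\ind_{E/F}(\Delta_E^{n-1}\theta),\psi),
\end{align*}
one for each admissible tame pair $(E/F,\theta)\in\mathcal{T}_n(F)$.

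Each such identity is then explicitly computable from both sides. On the Galois side, \eqref{formula::gamma--inductive} expresses the right-hand side as $\gamma(s,\Delta_E^{n-1}\theta,\psi_E)$ multiplied by the Langlands $\lambda$-constant for the unramified extension $E/F$; the first factor is a classical tame local Gauss sum attached to the character $\Delta_E^{n-1}\theta$ of $E^\times$. On the automorphic side, the Nien--Zhang connection formula \eqref{formula::Connection finite-local} (with $\tau=1_F$) reduces $\gamma(s,\boldsymbol{\pi}_n(\theta),\psi)$ to the finite-field gamma $\gamma(\widetilde{\boldsymbol{\pi}_n(\theta)},\widetilde\psi)$ up to an explicit $q$-power, and Nien's $n\times 1$ formula \eqref{formula::Nien n*1} rewrites that as $(-q^{-1})^{n-1}G(\widetilde\theta,\widetilde\psi)$ via the identification $\widetilde\theta=\eta_{\widetilde{\boldsymbol{\pi}_n(\theta)}}$ of Green parameters. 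Comparing the two expressions, the two Gauss sums match under the tame reduction $U_E\twoheadrightarrow\BF_{q^n}^\times$; the residual sign $(-1)^{n-1}=\Delta_E^{n-1}(\varpi)$ is precisely what the twist by $\Delta_E^{n-1}$ on the Galois side contributes.

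The main obstacle is this last reconciliation: one must identify the $p$-adic local $\varepsilon$-factor of the tame character $\Delta_E^{n-1}\theta$ of $E^\times$ with the finite-field Gauss sum $G(\widetilde\theta,\widetilde\psi)$ over $\BF_{q^n}$, and pin down the Langlands $\lambda$-constant $\lambda_{E/F}(\psi)$ for the unramified extension $E/F$. These are standard Tate-theoretic computations, but the normalization and sign bookkeeping must be handled carefully; it is precisely from this bookkeeping that the rectifier $\Delta_E^{n-1}$ emerges, forced by the sign $(-1)^{n-1}$ in Nien's Gauss-sum formula.
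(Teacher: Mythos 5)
The paper does not prove this proposition; it is imported verbatim as \cite[Theorem 2]{B-H-Explicit+level+0}, and Bushnell--Henniart's original argument runs through their essentially tame Langlands machinery (admissible pairs, rectifiers, and comparison of $\varepsilon$-factors of \emph{pairs}). Your proposal is therefore not comparable to an in-paper proof; it must be judged on its own merits, and as written it has two genuine gaps, one structural and one in the reduction step.

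First, the rigidity statement you invoke is not a theorem in the form you state it. Henniart's characterization of $\llc_n^F$ requires preservation of $\varepsilon$-factors of \emph{pairs} $\pi\times\tau$ with $\tau$ ranging over representations of $\GL_m(F)$, $m<n$ (together with the $\GL_1$ base case and compatibility with twisting and duality); preserving $\varepsilon(s,\pi\otimes\chi,\psi)$ for characters $\chi$ of $F^\times$ alone, plus central characters, is not known to pin down a single $\llc_n^F$ for general $n$. If you want to run this strategy you must cite a sharper uniqueness result tailored to the tame/level-zero setting --- for instance Moy's tame-correspondence theorem, or Bushnell--Henniart's own internal uniqueness lemma for admissible pairs over unramified $E/F$ --- and then verify exactly the hypotheses of that statement.

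Second, even granting some rigidity-by-character-twists theorem, your absorption step is incomplete. The identity $\chi\cdot\boldsymbol{\pi}_n(\theta)=\boldsymbol{\pi}_n(\theta\cdot(\chi\circ\N_{E/F}))$, and its Galois-side counterpart via the projection formula, holds \emph{only when $\chi$ is tame}: for wildly ramified $\chi$ the twisted character $\theta\cdot(\chi\circ\N_{E/F})$ is no longer trivial on $U_E^1$, the pair $(E/F,\theta\cdot(\chi\circ\N_{E/F}))$ is no longer tame, and $\chi\cdot\boldsymbol{\pi}_n(\theta)$ is no longer a level zero supercuspidal parameterized by $\boldsymbol{\pi}_n$. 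So the family of untwisted $\gamma$-identities over admissible tame pairs you reduce to is strictly \emph{smaller} than the family of $\varepsilon$-identities over all characters of $F^\times$ demanded by any rigidity theorem; the highly ramified twists must be handled by a separate argument (typically stability of $\varepsilon$-factors under such twists, together with matching determinants). Your proposal is silent about this.

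The computational kernel --- using \eqref{formula::gamma--inductive} on the Galois side, \eqref{formula::Connection finite-local} with $\tau=1_F$ and \eqref{formula::Nien n*1} on the automorphic side, and reading off $\Delta_E^{n-1}$ from the sign $(-1)^{n-1}=\Delta_E^{n-1}(\varpi)$ --- is the right phenomenon to be aiming at, and it is genuinely a more direct route than Bushnell--Henniart's, since it exploits the Nien and Nien--Zhang formulae that postdate \cite{B-H-Explicit+level+0}. But you openly leave the $\lambda$-constant and Gauss-sum normalization unchecked, and the argument does not close without it. Also worth flagging: \eqref{formula::Connection finite-local} requires $n>m$, so the case $n=1$ (where the claim is just class field theory and $\Delta_E^0=1$) must be treated separately.
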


   \section{Unramified base change}\label{section::Explicit BC}
	
	We determine the unramified base change of a level zero supercuspidal representation in this section.

	 For two unramified extensions $K$ and $E$ over $F$, we will write $KE$ as the compositum of $K$ and $E$, which is still an unramified extension. Recall that $\phi$ is a fixed arithmetic Frobenius element in $\weil_F$. For two nonnegative integers $n$ and $m$, let $(n,m)$ denote as usual the greatest common divisor of $n$ and $m$.
	 
	\begin{prop}\label{prop::base change}
		Let $(E/F,\theta) \in \mathcal{T}_n(F)$ and $\pi=\boldsymbol{\pi}_n(\theta)$ as in \eqref{diagram::level 0 -- tame pair}. Assume that $K/F$ is an unramified field extension of degree $m$. Then 
		\begin{align}\label{formula::base change}
		\pi_{K/F} = \boldsymbol{\pi}_{n/(n,m)}(\varsigma_1) \times \cdots \times \boldsymbol{\pi}_{n/(n,m)}(\varsigma_{(n,m)}), 
		\end{align}
		where, for each $i$, 
		\begin{align}\label{formula::defn-Varsigma i}
		\varsigma_i = \Delta_{KE}^{n/(n,m)-1}\cdot (\Delta_E^{n-1} \shskip \theta)\circ \phi^i|_E \circ \N_{KE/E}
		\end{align}
		is a character of $(KE)^{\times}$ and $(KE/K,\varsigma_i)$ is an admissible tame pair. The representation on the right hand side of \eqref{formula::base change} does not depend on the order of these $\boldsymbol{\pi}_{n/(n,m)}(\varsigma_i)$.
	\end{prop}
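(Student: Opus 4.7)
The plan is to transport the statement to the Galois side of the local Langlands correspondence, compute the restriction there via Mackey, and then reinterpret each piece through the explicit level-zero LLC \eqref{formula::explicit level 0}. By \eqref{formula::base change and LLC}, $\llc^K_n(\pi_{K/F})=\res_{K/F}\llc^F_n(\pi)$, and by \eqref{formula::explicit level 0}, $\llc^F_n(\pi)=\ind_{E/F}(\Delta_E^{n-1}\theta)$. Hence it suffices to decompose $\res_{K/F}\ind_{E/F}(\Delta_E^{n-1}\theta)$ into irreducibles of $\weil_K$ and to recognize each summand as the $\llc^K_{n/(n,m)}$-image of some $\boldsymbol{\pi}_{n/(n,m)}(\varsigma_i)$; the additivity \eqref{formula::LLC-additive} will then deliver \eqref{formula::base change}.

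For the Mackey step, since $E/F$ and $K/F$ are unramified (hence Galois), $\weil_E$ and $\weil_K$ are normal in $\weil_F$, and Galois theory identifies $\weil_E\cap\weil_K=\weil_{EK}$ and $\weil_E\weil_K=\weil_{E\cap K}$; consequently $\weil_E\backslash\weil_F/\weil_K=\weil_{E\cap K}\backslash\weil_F\cong\Gal(E\cap K/F)$ has cardinality $(n,m)$, with representatives $\phi^0,\phi^1,\ldots,\phi^{(n,m)-1}$. Mackey's formula then gives
\begin{align*}
\res_{K/F}\ind_{E/F}(\Delta_E^{n-1}\theta)=\bigoplus_{i=0}^{(n,m)-1}\ind_{EK/K}\bigl(({}^{\phi^i}(\Delta_E^{n-1}\theta))|_{\weil_{EK}}\bigr).
\end{align*}
Via the Galois-equivariance of Artin reciprocity $\boldsymbol{a}_E$, the $\phi^i$-conjugate character ${}^{\phi^i}(\Delta_E^{n-1}\theta)$ corresponds on $E^{\times}$ to $(\Delta_E^{n-1}\theta)\circ\phi^i|_E$ (the ambiguity $\phi^i$ versus $\phi^{-i}$ in the convention merely permutes the summands), and restriction from $\weil_E$ to $\weil_{EK}$ becomes precomposition with $\N_{EK/E}$. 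Thus the character in the $i$-th summand is $\chi_i:=(\Delta_E^{n-1}\theta)\circ\phi^i|_E\circ\N_{EK/E}$, and using $\Delta_{EK}^2=1$ we rewrite $\chi_i=\Delta_{EK}^{n/(n,m)-1}\varsigma_i$. By \eqref{formula::explicit level 0} applied to the tame pair $(EK/K,\varsigma_i)$ of degree $[EK:K]=n/(n,m)$, the $i$-th summand is $\llc^K_{n/(n,m)}(\boldsymbol{\pi}_{n/(n,m)}(\varsigma_i))$. Combining with \eqref{formula::LLC-additive} and the bijectivity of $\llc^K_n$ yields \eqref{formula::base change}; independence of order is automatic from the commutativity of $\bigoplus$ on the Galois side.

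What remains is to verify that each $(EK/K,\varsigma_i)$ is an admissible tame pair. Since $\Delta_{EK}$ is unramified and $\N_{EK/E}(U_{EK}^1)\subseteq U_E^1$, while $\Delta_E^{n-1}\theta$ is trivial on $U_E^1$, the character $\varsigma_i$ is trivial on $U_{EK}^1$, so $(EK/K,\varsigma_i)$ is a tame pair. For admissibility, I appeal to Lemma \ref{lemma::tame pair}(iii): if $\varsigma_i$ factored through $\N_{EK/L}$ for some intermediate $K\subseteq L\subsetneq EK$, then after using that $\Delta_E$ and $\Delta_{EK}$ are unramified and that the norm maps in the tower $F\subseteq L\cap E\subseteq E$ are compatible, $\theta$ would factor through $\N_{E/L\cap E}$, contradicting admissibility of $(E/F,\theta)$. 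The main technical obstacle is precisely this bookkeeping---matching Artin-reciprocity conventions for the Galois conjugation and reconciling the twist $\Delta_{EK}^{n/(n,m)-1}$ introduced by the $K$-side explicit LLC with the original twist $\Delta_E^{n-1}$ on the $F$-side; once these conventions are fixed the rest is a direct application of Sections 1--4.
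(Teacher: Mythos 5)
Your plan---transfer to the Galois side via \eqref{formula::base change and LLC} and \eqref{formula::explicit level 0}, decompose $\res_{K/F}\ind_{E/F}(\Delta_E^{n-1}\theta)$ by Mackey, translate the conjugate characters through Artin reciprocity, and reconcile the two $\Delta$-twists---is the same route the paper takes; the convention bookkeeping you flag is handled there by citing the compatibility of Artin reciprocity with Galois conjugation (Serre, Chap.\ XIII, \S4, Prop.\ 11), and the $\phi^i$ versus $\phi^{-i}$ ambiguity you note is indeed harmless since it only permutes the orbit.

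There is, however, a genuine gap at the end. Formula \eqref{formula::LLC-additive} is the Langlands-classification step: it sends a direct sum on the Galois side to the Langlands \emph{quotient} $J(\pi_1,\dots,\pi_r)$, not to the parabolic induction $\pi_1\times\cdots\times\pi_r$, which the paper's conventions define as the full normalized induced representation (a priori reducible). So from $\llc_n^K(\pi_{K/F})=\bigoplus_i\llc_{n/(n,m)}^K(\boldsymbol{\pi}_{n/(n,m)}(\varsigma_i))$ and bijectivity of $\llc_n^K$ you only obtain $\pi_{K/F}=J(\boldsymbol{\pi}_{n/(n,m)}(\varsigma_1),\dots,\boldsymbol{\pi}_{n/(n,m)}(\varsigma_{(n,m)}))$. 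To upgrade this to \eqref{formula::base change} you must additionally verify that the parabolic induction is irreducible. The paper does this by observing that no two of the $\boldsymbol{\pi}_{n/(n,m)}(\varsigma_i)$ are linked in Zelevinsky's sense (they are pairwise inequivalent supercuspidals whose central characters agree at $\varpi$, so none is a $\nu^{\pm1}$-twist of another) and invoking \cite[Theorem 4.2]{Zelevinsky-II}; this also yields the order-independence, whereas your appeal to commutativity of $\bigoplus$ only shows that the Langlands quotient is order-independent, which is weaker without irreducibility. A smaller point: your admissibility argument needs $L\cap E\subsetneq E$ to produce a contradiction, and this rests on the coprimality of $[KE:E]=m/(n,m)$ with $[KE:K]=n/(n,m)$; the paper makes this coprimality explicit, and you should too, since otherwise the strict containment could fail.
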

	 \begin{proof}
	 	By \eqref{formula::base change and LLC} and \eqref{formula::explicit level 0}, we have
	 	\begin{align}\label{formula::BC 1st}
	 	\llc_n^K(\pi_{K/F}) = \res_{K/F}\llc_n^F(\pi) =  \mathrm{Res}_{K/F}\ind_{E/F}\Delta_E^{n-1}\theta.
	 	\end{align}
	 	As $\mathcal{W}_E$ is a normal subgroup of $\weil_F$ of finite index, we can apply Mackey's restriction formula \eqref{formula::Mackey-restriction}. Note that, as $E/F$, $K/F$ are unramified extensions, the inertia groups $\mathcal{I}_F$, $\mathcal{I}_E$ and $\mathcal{I}_K$ are all the same. Recall that $\Phi$ is a fixed geometric Frobenius element in $\weil_F$, then $\Phi^n$ (resp. $\Phi^m$) is a geometric Frobenius element of $\weil_E$ (resp. $\weil_K$). Therefore we can take $\{\Phi,\cdots,\Phi^{(n,m)}\}$ to be a set of representatives of double cosets $\weil_K
	 	\backslash \weil_F /\weil_E$. Note that $EK$ is the unramified extension of $F$ of degree $[n,m]$, where $[n,m]$ is the least common multiple of $n$ and $m$, and that $\weil_E \cap \weil_K= \weil_{KE}$. Applying \eqref{formula::Mackey-restriction}, we get 
	 	\begin{align}\label{formula::BC-decomposition}
	 	\mathrm{Res}_{K/F}\ind_{E/F}\Delta_E^{n-1}\theta =\mathop{\oplus}_{i=1}^{(n,m)} \ind_{KE/K}    {}^{\Phi^i}\!(\Delta_E^{n-1}\theta)|_{\weil_{KE}}.
	 	\end{align}
        It follows from \cite[Proposition 11, \S 4, Chap XIII]{Serre} that the character ${}^{\Phi^i}\!(\Delta_E^{n-1}\theta)$ of $\weil_E$ corresponds exactly to the character $(\Delta_E^{n-1}\shskip \theta) \circ \phi^i|_E$ of $E^{\times}$ via local class field theory, where $\phi=\Phi^{-1}$ is a arithmetic Frobenius element of $\weil_F$. Hence the restriction of ${}^{\Phi^i}\!(\Delta_E^{n-1}\theta)$ to $\weil_{KE}$ corresponds to the character $(\Delta_E^{n-1} \shskip \theta)\circ \phi^i|_E \circ \N_{KE/E}$ of $(KE)^{\times}$. So we rewrite \eqref{formula::BC-decomposition} as
        \begin{align}\label{formula::BC-decomposition-II}
        \mathrm{Res}_{K/F}\ind_{E/F}\Delta_E^{n-1}\theta =\mathop{\oplus}_{i=1}^{(n,m)} \ind_{KE/K} (\Delta_E^{n-1} \shskip \theta)\circ \phi^i|_E \circ \N_{KE/E}.
        \end{align}
        
        \textbf{Claim}: $(KE/K,(\Delta_E^{n-1} \shskip \theta)\circ \phi^i|_E \circ \N_{KE/E})$ is an admissible tame pair of degree $n/(n,m)$. In fact, $KE/K$ is an unramified field extension of degree $[n,m]/m=n/(n,m)$. Set $\xi=(\Delta_E^{n-1} \shskip \theta)\circ \phi^i|_E \circ \N_{KE/E}$. As $N_{KE/E}(U^1_{KE}) \subset U^1_{E}$ and $\theta$ is trivial on $U^1_E$, $\xi$ is trivial on $U_{KE}^1$. So $(KE/K,\xi)$ is by definition a tame pair of degree $n/(n,m)$.  We are then left to show that $(KE/K,\xi)$ is admissible. Suppose that $\xi$ is not admissible, then it is fixed by a subgroup $G'$, $G'\neq e$, of $\Gal(KE/K)$. Note that $\xi$ is obviously fixed by $\Gal(KE/E)$. As the orders of $\Gal(KE/K)$ and $\Gal(KE/E)$ are coprime, we infer that $\xi$ is fixed by a subgroup of $\Gal(KE/F)$ that is strictly larger than $\Gal(KE/E)$. By the arguments in the proof of Lemma \ref{lemma::tame pair}, this means that $\xi$ factors through $\N_{KE/L}$, where $L/F$ is a subextension of $E/F$. By the transitivity of the norm map and the fact that $\N_{KE/E}$ is surjective, we conclude that $(\Delta_E^{n-1}\shskip \theta)\circ \phi^i|_E$ factors through $\N_{E/L}$. This implies easily that $\theta$ also factors through $\N_{E/L}$, which contradicts the assumption that $(E/F,\theta)$ is an admissible pair.
        
        Therefore, by \eqref{formula::explicit level 0}, we have
        \begin{align}\label{formula::BC 3}
        \ind_{KE/K}(\Delta_E^{n-1} \shskip \theta)\circ \phi^i|_E \circ \N_{KE/E} =     \llc_{n/(n,m)}^K(\boldsymbol{\pi}_{n/(n,m)} (\varsigma_i))
        \end{align}
        with $\varsigma_i$ defined in \eqref{formula::defn-Varsigma i}.
        Note that no two of the representations $\boldsymbol{\pi}_{n/(n,m)}(\varsigma_i)$ and $\boldsymbol{\pi}_{n/(n,m)}(\varsigma_j)$ are linked in the sense of \cite{Zelevinsky-II} and that the condition \eqref{Condition} is satisfied, hence the representation 
        $$\boldsymbol{\pi}_{n/(n,m)}(\varsigma_1) \times \cdots\times \boldsymbol{\pi}_{n/(n,m)}(\varsigma_{(n,m)})$$
         is irreducible (\cite[Theorem 4.2]{Zelevinsky-II}) and does not depend on the order of these $\boldsymbol{\pi}_{n/(n,m)(\varsigma_i)}$. 
         
         By \eqref{formula::LLC-additive}, together with \eqref{formula::BC 3}, \eqref{formula::BC-decomposition-II} and \eqref{formula::BC 1st}, we get
         \begin{align*}
         \llc_n^K(\pi_{K/F})& = \mathop{\oplus}_{i=1}^{(n,m)} \llc_{n/(n,m)}^K(\boldsymbol{\pi}_{n/(n,m)}(\varsigma_i) \nonumber\\
         &=\llc_n^K(\boldsymbol{\pi}_{n/(n,m)}(\varsigma_1) \times \cdots\times \boldsymbol{\pi}_{n/(n,m)}(\varsigma_{(n,m)})) .
         \end{align*}
        So, by the Langlands correspondence, we are done.
        

	 	\end{proof}
	In particular, we see that an unramified base change of $\pi \in \mathcal{A}^0_n(F)_0$ is again supercuspidal if and only if the degree of the base field extension is coprime with $n$.

	
To make use of \eqref{formula::gamma-BC + AI} later in our computation, we need one simple observation. Any level zero supercuspidal representation is automorphically induced by a character.

\begin{lem}\label{lem::level 0 =AI}
	Let $(K/F,\lambdaup)$ be an admissible tame pair of degree $m$ and $\pi_m(\lambda)$ be the representation as in \eqref{diagram::level 0 -- tame pair}. Then
	\begin{align}
	\boldsymbol{\pi}_m (\lambdaup) = (\Delta_K^{m-1}\lambdaup)^{K/F}.
	\end{align}
\end{lem}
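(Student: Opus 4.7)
The plan is to apply the local Langlands correspondence $\llc_m^F$ to both sides of the asserted equality and then invoke its injectivity. Since $\boldsymbol{\pi}_m(\lambdaup)$ is already characterized via $\llc_m^F$ by the explicit formula \eqref{formula::explicit level 0}, and $(\Delta_K^{m-1}\lambdaup)^{K/F}$ is characterized via $\llc_m^F$ by the automorphic induction identity \eqref{formula::automorphic induction}, it suffices to match the two Galois-side objects that emerge.

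First I would check a small preliminary: the pair $(K/F,\Delta_K^{m-1}\lambdaup)$ is itself an admissible tame pair of degree $m$. Indeed, $\Delta_K$ is unramified, so $\Delta_K^{m-1}|_{U_K}$ is trivial and the restriction of $\Delta_K^{m-1}\lambdaup$ to $U_K$ coincides with $\lambdaup|_{U_K}$; admissibility then follows from Lemma \ref{lemma::tame pair}(ii). In particular, $\Delta_K^{m-1}\lambdaup$, regarded as a character of $K^\times = \GL_1(K)$, is a generic element of $\mathcal{A}_1(K)$, so $(\Delta_K^{m-1}\lambdaup)^{K/F}$ is well-defined by \cite{Henniart-Herb-AI}.

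Next I would compute the two Galois parameters. On one hand, \eqref{formula::explicit level 0} applied to the admissible tame pair $(K/F,\lambdaup)$ gives
\begin{align*}
\llc_m^F(\boldsymbol{\pi}_m(\lambdaup)) = \boldsymbol{\sigma}_m(\Delta_K^{m-1}\lambdaup) = \ind_{K/F}(\Delta_K^{m-1}\lambdaup),
\end{align*}
where $\Delta_K^{m-1}\lambdaup$ is viewed as a character of $\weil_K$ via $\boldsymbol{a}_K$. On the other hand, since $\llc_1^K$ is exactly local class field theory, applied to the character $\Delta_K^{m-1}\lambdaup$ of $K^\times$ it returns the same character of $\weil_K$, so \eqref{formula::automorphic induction} yields
\begin{align*}
\llc_m^F\bigl((\Delta_K^{m-1}\lambdaup)^{K/F}\bigr) = \ind_{K/F}\bigl(\llc_1^K(\Delta_K^{m-1}\lambdaup)\bigr) = \ind_{K/F}(\Delta_K^{m-1}\lambdaup).
\end{align*}

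Comparing the two displays and applying the bijectivity of $\llc_m^F$ gives the claimed equality. There is no serious obstacle here; the only point that deserves a line of verification is the admissibility check above, which ensures both sides make sense as supercuspidal (respectively generic) representations and that the normalizations of the two occurrences of $\ind_{K/F}(\Delta_K^{m-1}\lambdaup)$—one arising from $\boldsymbol{\sigma}_m$ and one from $\llc_1^K$—agree.
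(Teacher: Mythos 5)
Your proof is correct and follows exactly the route the paper intends: apply $\llc_m^F$ to both sides, use \eqref{formula::explicit level 0} on the left and \eqref{formula::automorphic induction} with $\llc_1^K$ equal to class field theory on the right, and conclude by bijectivity. The paper's own proof is simply ``This follows immediately from \eqref{formula::automorphic induction} and \eqref{formula::explicit level 0},'' so you have just filled in the same argument with the admissibility check made explicit.
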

\begin{proof}
	This follows immediately from \eqref{formula::automorphic induction} and \eqref{formula::explicit level 0}.
\end{proof}

    \section{Main result}

    Before the proof of our main result, we note that
    \begin{lem}\label{lemma::gamma-character}
    	Let $K/F$ be an unramified field extension of degree $m$. Let $\varXi$ be the group of characters of $F^{\times}$ that is trivial on $\N_{K/F}(K^{\times})$. Suppose that $\psi$ has level one. Then
    	\begin{align}
    	\prod_{\chi \in \varXi} \gamma(s,\chi,\psi) = (-1)^{m-1}\gamma(s,1_K,\psi_K).
    	\end{align}
    \end{lem}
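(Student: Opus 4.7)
The plan is to interpret the left-hand side as a gamma factor of an induced Weil representation, which reduces the identity to computing the Langlands $\lambda$-constant for the unramified extension $K/F$ at a level-one additive character. By local class field theory, $\varXi$ corresponds to the group $\widehat{\weil_F/\weil_K}$ of characters of $\Gal(K/F)$; the decomposition $\ind_H^G 1_H = \bigoplus_{\chi \in \widehat{G/H}}\chi$ (already used in the proof of Proposition 2.1) then specializes to
$$\ind_{K/F}\, 1_K = \bigoplus_{\chi \in \varXi} \chi.$$
Combined with the additive property \eqref{formula::gamma--additive} of gamma factors, this rewrites $\prod_{\chi \in \varXi}\gamma(s,\chi,\psi)$ as $\gamma(s,\ind_{K/F}\,1_K,\psi)$, so the lemma becomes the identity
$$\gamma(s,\ind_{K/F}\,1_K,\psi) \;=\; (-1)^{m-1}\,\gamma(s,1_K,\psi_K). \qquad(\ast)$$

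To prove $(\ast)$ I would compute the $L$- and $\varepsilon$-factor ratios separately. Write each $\chi \in \varXi$ as the unramified character $\chi_\zeta$ with $\chi_\zeta(\varpi) = \zeta$ and $\zeta^m = 1$. The elementary identity $\prod_{\zeta^m=1}(1-\zeta X) = 1-X^m$ at once yields
$$\prod_{\chi}L(s,\chi)^{-1} = 1 - q^{-ms} = L(s,1_K)^{-1},$$
and similarly for $L(1-s,\chi^{-1})$, so the $L$-factor ratios on the two sides of $(\ast)$ coincide. Since $K/F$ is unramified, $\psi_K = \psi \circ \Tr_{K/F}$ is also of level one on $K$ (one checks that $\Tr_{K/F}(\mathfrak{p}_K)=\mathfrak{p}_F$ and that $\Tr_{K/F}\colon \mathfrak{o}_K/\mathfrak{p}_K \to \mathfrak{o}_F/\mathfrak{p}_F$ is nonzero). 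For an unramified character $\chi$ with $\chi(\varpi)=\alpha$, Deligne's dilation identity $\varepsilon(s,\chi,\psi^a) = \chi(a)|a|^{s-1/2}\varepsilon(s,\chi,\psi)$, applied with $\psi = (\psi^0)^{\varpi^{-1}}$ from a $\psi^0$ of conductor $\mathfrak{o}_F$ (for which $\varepsilon(s,\chi,\psi^0)=1$), gives $\varepsilon(s,\chi,\psi) = \alpha^{-1} q^{s-1/2}$. Multiplying over $\zeta^m = 1$ and using $\prod_{\zeta^m=1}\zeta^{-1} = (-1)^{m-1}$ yields
$$\prod_{\chi} \varepsilon(s,\chi,\psi) \;=\; (-1)^{m-1}\,q^{m(s-1/2)},$$
while the same formula over $K$ gives $\varepsilon(s,1_K,\psi_K) = q_K^{s-1/2} = q^{m(s-1/2)}$. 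The ratio of $\varepsilon$-factors is therefore $(-1)^{m-1}$, and combining with the matching $L$-ratios proves $(\ast)$.

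The principal obstacle---more bookkeeping than substance---is pinning down the normalization of the $\varepsilon$-factor carefully enough to obtain the correct sign. Once the convention ``level one $=$ conductor $\mathfrak p$'' is fixed and the value of $\varepsilon(s,\chi,\psi)$ is computed by dilation from the conductor-$\mathfrak{o}$ case, everything rests on the elementary identity $\prod_{\zeta^m=1}\zeta = (-1)^{m-1}$.
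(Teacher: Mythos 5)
Your argument is correct and amounts to the same computation as the paper's: both boil down to the explicit formula $\varepsilon(s,\chi,\psi)=\chi(\varpi)^{-1}q^{s-1/2}$ for an unramified $\chi$ and a level-one $\psi$, followed by the product over the $m$-th roots of unity using $\prod_{\zeta^m=1}\zeta^{-1}=(-1)^{m-1}$ (together with the observation that $K/F$ unramified forces all $\chi\in\varXi$ to be unramified). The paper quotes the combined formula for $\gamma(s,\chi,\psi)$ directly from the Bushnell--Henniart reference and multiplies; your opening reinterpretation of the product as $\gamma(s,\ind_{K/F}1_K,\psi)$ and the dilation computation of $\varepsilon$ are harmless alternative derivations of the same ingredients.
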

    \begin{proof}
    	The characters in $\varXi$ are unramified as $K/F$ is unramified and the norm map $\N_{K/F}$ is surjective. Then by \cite[\S 23.4,\S 23.5]{B-H-GL2}, for $\chi \in \varXi$, we have that
    	\begin{align}
    	\gamma(s,\chi,\psi) = \frac{q^{s-1/2}}{\chi(\varpi)}\cdot \frac{1-\chi(\varpi)q^{-s}}{1-\chi(\varpi)^{-1}q^{s-1}}=-q^{-s-1/2}\frac{q^s-\chi(\varpi)}{q^{s-1}-\chi(\varpi)}.
    	\end{align}
    	When $\chi$ runs over $\varXi$, the value $\chi(\varpi)$ runs over the m-th roots of unity, as the group $\varXi$ is cyclic of order $m$. Hence
    	\begin{align}
    	\prod_{\chi \in \varXi} \gamma(s,\chi,\psi) = (-1)^mq^{-ms-m/2}\frac{q^{ms}-1}{q^{ms-m}-1}=(-1)^{m-1}\gamma(s,1_K,\psi_K).
    	\end{align}
    	\end{proof}
    \begin{thm}
    	Let $\pi \in \mathcal{A}^0_n(F)_0$ and $\tau \in \mathcal{A}^0_m(F)_0$ with $ n > m$ and $\psi$ of level one. Then
    	\begin{align}\label{formula::Main}
    	\gamma (s,\pi \times \tau,\psi) = (-1)^{nm-(n,m)}q^{-nm/2}\prod_{i=0}^{(n,m)-1} G(\shskip \widetilde{\eta_{\pi}}^{q^i}\circ \N_{[n,m]:n}\cdot \widetilde{\eta_{\tau}}\circ \N_{[n,m]:m},\tilde{\psi}).
    	\end{align}
    \end{thm}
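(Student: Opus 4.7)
The strategy is to reduce the computation of $\gamma(s, \pi \times \tau, \psi)$ to the $n \times 1$ case by combining the base-change/automorphic-induction comparison \eqref{formula::gamma-BC + AI} with the decomposition of $\pi_{K/F}$ given by Proposition~\ref{prop::base change}. First I would realize $\tau$ as $\boldsymbol{\pi}_m(\lambdaup)$ for an admissible tame pair $(K/F, \lambdaup)$ with $K$ the degree-$m$ unramified extension of $F$, and rewrite $\tau = (\Delta_K^{m-1}\lambdaup)^{K/F}$ via Lemma~\ref{lem::level 0 =AI}. Applying \eqref{formula::gamma-BC + AI} with $\rho = \Delta_K^{m-1}\lambdaup$ (so $n'=1$) and simplifying the character-product ratio using Lemma~\ref{lemma::gamma-character} yields
\begin{align*}
\gamma(s, \pi \times \tau, \psi) = (-1)^{n(m-1)} \gamma(s, \pi_{K/F} \times \Delta_K^{m-1}\lambdaup, \psi_K).
\end{align*}

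Next, I would use Proposition~\ref{prop::base change} to write $\pi_{K/F}$ as the irreducible product $\boldsymbol{\pi}_{n/(n,m)}(\varsigma_1) \times \cdots \times \boldsymbol{\pi}_{n/(n,m)}(\varsigma_{(n,m)})$, and invoke the additivity of $\gamma$ (which follows from \eqref{formula::LLC-additive} and \eqref{formula::gamma--additive} via local Langlands) to split
\begin{align*}
\gamma(s, \pi_{K/F} \times \Delta_K^{m-1}\lambdaup, \psi_K) = \prod_{i=1}^{(n,m)} \gamma(s, \boldsymbol{\pi}_{n/(n,m)}(\varsigma_i) \times \Delta_K^{m-1}\lambdaup, \psi_K).
\end{align*}
Each factor is an $(n/(n,m)) \times 1$ gamma factor over the $p$-adic field $K$ (whose residue field has $q^m$ elements), and $n/(n,m) > 1$ since $n > m$. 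I would evaluate each by combining the Nien--Zhang connection formula \eqref{formula::Connection finite-local} with Nien's finite-field $n \times 1$ formula \eqref{formula::Nien n*1}, producing a closed form involving a Gauss sum together with explicit signs and powers of $q^m$.

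For the translation between the tame-pair data and the regular characters $\widetilde{\eta_{\pi}}, \widetilde{\eta_{\tau}}$ appearing in the target formula, I would observe: (i) $\Delta_E$ and $\Delta_{KE}$ are unramified, hence trivial on units, so $\widetilde{\varsigma_i} = \widetilde{\theta}^{q^i} \circ \N_{[n,m]:n} = \widetilde{\eta_{\pi}}^{q^i} \circ \N_{[n,m]:n}$, using that $\phi|_E$ induces the $q$-th power Frobenius on $\kappa_E \cong \BF_{q^n}$; (ii) $\widetilde{\Delta_K^{m-1}\lambdaup} = \widetilde{\eta_{\tau}}$; (iii) since $K/F$ is unramified, $\widetilde{\psi_K} = \widetilde{\psi} \circ \Tr_{\BF_{q^m}/\BF_q}$, so transitivity of trace identifies each Gauss sum over $\BF_{q^{[n,m]}}^{\times}$ with respect to $\widetilde{\psi_K}$ with one with respect to $\widetilde{\psi}$, matching the right-hand side of \eqref{formula::Main}.

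The main effort is the final book-keeping. One must verify that the sign $(-1)^{n(m-1)}$ from the BC/AI step, combined with the $\widetilde{\eta_{\tau}}(-1)$-factors and the $-1$'s produced by each of the $(n,m)$ applications of the Nien--Zhang reduction and Nien's $n \times 1$ formula, collapses to the advertised $(-1)^{nm - (n,m)}$, and that the collected powers of $q^m$ collapse to $q^{-nm/2}$. The key algebraic inputs are that $\widetilde{\eta_{\tau}}(-1)$ appears to an even total power across the product, and the computation $(n,m)\bigl[(n/(n,m)-2)/2 - (n/(n,m)-1)\bigr] = -n/(2(n,m))$.
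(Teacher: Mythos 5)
Your proposal follows exactly the paper's route: write $\tau$ as an automorphic induction of a character via Lemma~\ref{lem::level 0 =AI}, apply the BC/AI comparison~\eqref{formula::gamma-BC + AI} simplified by Lemma~\ref{lemma::gamma-character}, decompose $\pi_{K/F}$ by Proposition~\ref{prop::base change}, split via multiplicativity of $\gamma$, and finish each factor with Nien--Zhang~\eqref{formula::Connection finite-local} and Nien's $n\times 1$ formula~\eqref{formula::Nien n*1}; the translations (i)--(iii) are all correct. One minor arithmetic slip in your last sentence: $(n,m)\bigl[(n/(n,m)-2)/2 - (n/(n,m)-1)\bigr] = -n/2$, not $-n/(2(n,m))$, which is what makes $(q^m)^{-n/2} = q^{-nm/2}$ come out.
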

    where $\widetilde{\eta_{\pi}}$ resp. $\widetilde{\eta_{\tau}}$ is the regular character of $\BF_{q^n}^{\times}$ resp. $\BF_{q^m}^{\times}$ that corresponds to $\tilde{\pi}$ resp. $\tilde{\tau}$ via Green's parameterization and $G$ is the Gauss sum as defined by \eqref{defn::Gauss sum}.
   \begin{proof}
   	We split the proof in the following steps.
 

   	 Step (1). Suppose that $(K/F,\eta_{\tau})$ is the admissible tame pair that corresponds to $\tau$ by $\boldsymbol{\pi}_m$ (see \eqref{diagram::level 0 -- tame pair}). By Lemma \ref{lem::level 0 =AI}, we have  
   	 $$\tau = \boldsymbol{\pi}_{m}(\eta_{\tau})=(\Delta_K^{m-1}\eta_{\tau})^{K/F}.$$
   	
   	Applying \eqref{formula::gamma-BC + AI}, we get
   	\begin{align}\label{formula::Main I}
   		\gamma (s, \pi \times \tau,\psi)&=\gamma (s, \pi \times (\Delta_K^{m-1}\eta_{\tau})^{K/F},\psi) \nonumber \\
   		&=\gamma (s,\pi_{K/F} \times \Delta_K^{m-1}\eta_{\tau} ,\psi_K) \frac{\prod_{\chi \in \varXi}\gamma (s,\chi,\psi_F)^{n} }{\gamma (s,1_K,\psi_K)^{n}}\\
   		&=(-1)^{mn-n}\gamma (s,\pi_{K/F} \times \Delta_K^{m-1}\eta_{\tau} ,\psi_K).\nonumber 
   	\end{align}
   	The last equality follows from Lemma \ref{lemma::gamma-character}.

   	Step (2). Suppose that $(K'/F,\eta_{\pi})$ is the admissible tame pair that corresponds to $\pi$. So $\pi = \boldsymbol{\pi}_n(\eta_{\pi})$. By \eqref{formula::base change}, we get
   	\begin{align*}
   \pi_{K/F}= \boldsymbol{\pi}_{n/(n,m)}(\varsigma_1) \times \cdots \times \boldsymbol{\pi}_{n/(n,m)}(\varsigma_{(n,m)}).
   	\end{align*}
   	where $\boldsymbol{\pi}_{n/(n,m)}(\varsigma_i)\in \mathcal{A}^0_{n/(n,m)}(K)_0$ corresponds to the admissible tame pair $(K'K/K,\varsigma_i)$, and 
   	\begin{align}\label{formula::Varsigma_i}
   	\varsigma_i = \Delta_{KK'}^{n/(n,m)-1}\cdot (\Delta_{K'}^{n-1} \shskip \eta_{\pi})\circ \phi^i|_{K'} \circ \N_{KK'/K'}.
   	\end{align}
   	 By the multiplicativity of local $\gamma$-factors \cite[Theorem 3.1]{J-PS-S-Rankin+Selberg}, we get
   	\begin{align}\label{formula::Main II}
   	\gamma (s,\pi_{K/F} \times \Delta_K^{m-1}\eta_{\tau} ,\psi_K) = \prod_{i=1}^{(n,m)} \gamma(s,\boldsymbol{\pi}_{n/(n,m)}(\varsigma_i)\times \Delta_K^{m-1}\eta_{\tau} ,\psi_K).
   	\end{align}
   	
   	Step (3). Now we invoke the formula \eqref{formula::Connection finite-local}. We have, for each $i$, that
   	\begin{align}\label{formula::Main III}
   	&\gamma(s,\boldsymbol{\pi}_{n/(n,m)}(\varsigma_i)\times \Delta_K^{m-1}\eta_{\tau} ,\psi_K)\nonumber \\&= \eta_{\tau}(-1)^{n/(n,m)-1}q^{[n,m]/2-m}\gamma(\widetilde{\boldsymbol{\pi}_{n/(n,m)}(\varsigma_i)}\times \widetilde{\Delta_K^{m-1}\eta_{\tau}},\widetilde{\psi_K}).
   	\end{align}
   	The gamma factor on the right hand side of \eqref{formula::Main III} is a gamma facotr over finite fields. From \eqref{formula::Varsigma_i}, the regular character which corresponds to $\widetilde{\boldsymbol{\pi}_{n/(n,m)}(\varsigma_i)}$ is 
   	\begin{align*}
   	\widetilde{\eta_{\pi}}^{q^i}\circ \N_{[n,m]:n}.
   	\end{align*}
   	 In view of \eqref{formula::Nien n*1}, we get
   	\begin{align}\label{formula::Main IV}
   	&\gamma(\widetilde{\boldsymbol{\pi}_{n/(n,m)}(\varsigma_i)}\times \widetilde{\Delta_K^{m-1}\eta_{\tau}},\widetilde{\psi_K}) \nonumber\\&= (-q^{-m}\widetilde{\eta_{\tau}}(-1))^{n/(n,m)-1}G(\widetilde{\eta_{\pi}}^{q^i}\circ \N_{[n,m]:n}\cdot \widetilde{\eta_{\tau}} \circ \N_{[n,m]:m},\widetilde{\psi}).
   	\end{align}
   	
   	Finally, combining \eqref{formula::Main I}, \eqref{formula::Main II}, \eqref{formula::Main III} and \eqref{formula::Main IV} together, we get \eqref{formula::Main}.

   	\end{proof}
    We can also reformulate a formula for gamma factors over finite fields.
    \begin{thm}\label{theorem::Main-Gamma-finite}
    	Let $\pi$ and $\tau$ be irreducible cuspidal representations of $\GL_n(\BF_q)$ and $\GL_m(\BF_q)$ respectively, $n > m$. Let $\eta_{\pi}$ and $\eta_{\tau}$ be the correspoding $\BF_q$-regular characters of $\BF_{q^n}^{\times}$ and of $\BF_{q^m}^{\times}$ via Green's parameterization. Then
    	\begin{align}
    	\gamma(\pi\times \tau,\psi) = (-1)^{nm-(n,m)}\eta_{\tau}(-1)^{n-1}q^{-mn+\tfrac{m^2+m}{2}}\prod_{i=0}^{(n,m)-1} G(\shskip \eta_{\pi}^{q^i}\circ \N_{[n,m]:n}\cdot \eta_{\tau}\circ \N_{[n,m]:m},\psi). 
    	\end{align}
    \end{thm}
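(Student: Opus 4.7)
The plan is to exploit the connection formula \eqref{formula::Connection finite-local} of Nien--Zhang together with the main Theorem \eqref{formula::Main} to extract the finite-field gamma factor. Given irreducible cuspidal representations $\pi$ and $\tau$ of $\GL_n(\BF_q)$ and $\GL_m(\BF_q)$ with corresponding $\BF_q$-regular characters $\eta_\pi,\eta_\tau$, first I would use the bijection \eqref{diagram::level 0} to lift them to level zero supercuspidal representations $\widehat{\pi} \in \mathcal{A}^0_n(F)_0$ and $\widehat{\tau} \in \mathcal{A}^0_m(F)_0$ (for instance, take the central characters at $\varpi$ to be trivial). By the definition of the lift, $\widetilde{\widehat{\pi}} = \pi$, $\widetilde{\widehat{\tau}} = \tau$, and the associated regular characters $\widetilde{\eta_{\widehat{\pi}}}$, $\widetilde{\eta_{\widehat{\tau}}}$ are precisely $\eta_\pi$ and $\eta_\tau$.

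Next I would compute the $p$-adic gamma factor $\gamma(s,\widehat{\pi} \times \widehat{\tau},\psi)$ in two different ways. On the one hand, Theorem \eqref{formula::Main} gives
\begin{align*}
\gamma(s,\widehat{\pi}\times\widehat{\tau},\psi) = (-1)^{nm-(n,m)} q^{-nm/2}\prod_{i=0}^{(n,m)-1} G\bigl(\eta_\pi^{q^i}\circ \N_{[n,m]:n}\cdot \eta_\tau\circ \N_{[n,m]:m},\widetilde{\psi}\bigr).
\end{align*}
On the other hand, Theorem \ref{thm::Nien-Zhang} (applied with $\psi$ of level one) gives
\begin{align*}
\gamma(s,\widehat{\pi}\times\widehat{\tau},\psi) = \omega_{\widehat{\tau}}(-1)^{n-1}\, q^{m(n-m-1)/2}\, \gamma(\pi\times\tau,\widetilde{\psi}).
\end{align*}
Since the central character $\omega_{\widehat{\tau}}$ of the lift restricted to $U_F$ is inflated from the central character of $\tau$, one has $\omega_{\widehat{\tau}}(-1) = \eta_\tau(-1)$.

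Then I would equate the two expressions and solve for $\gamma(\pi\times\tau,\widetilde{\psi})$. The power of $q$ simplifies as
\begin{align*}
-\tfrac{nm}{2} - \tfrac{m(n-m-1)}{2} = -nm + \tfrac{m^2+m}{2},
\end{align*}
and using $\eta_\tau(-1)^{1-n} = \eta_\tau(-1)^{n-1}$ (as $\eta_\tau(-1)=\pm 1$) one recovers exactly the stated formula, after replacing the symbol $\widetilde\psi$ by $\psi$ (which is the character of $\BF_q$ appearing in the finite-field statement).

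No real obstacle is expected here: the argument is essentially a two-line comparison. The only place that demands care is the bookkeeping of the sign $\eta_\tau(-1)^{n-1}$ and the arithmetic of the $q$-exponent, and the verification that the regular character attached to the lift coincides with the character attached to $\tau$ itself (which is immediate from the chosen normalization of the lift).
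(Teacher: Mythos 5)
Your proposal is correct and is exactly the intended derivation: the paper states Theorem \ref{theorem::Main-Gamma-finite} immediately after the $p$-adic result \eqref{formula::Main} precisely because it follows by choosing level-zero lifts with $\widetilde{\widehat\pi}=\pi$, $\widetilde{\widehat\tau}=\tau$, comparing \eqref{formula::Main} with the Nien--Zhang relation \eqref{formula::Connection finite-local}, and using $\omega_{\widehat\tau}(-1)=\eta_\tau(-1)$ together with the identity $-\tfrac{nm}{2}-\tfrac{m(n-m-1)}{2}=-nm+\tfrac{m^2+m}{2}$. Your bookkeeping of the sign and the $q$-exponent checks out.
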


\appendix

\section{Mackey's restriction formula}

\begin{prop}
	Let $G$ be a locally profinite group. Let $H$ be an open subgroup, and let $(\sigma,W)$ be a smooth representaton of $H$. Let $K$ be a closed subgroup of $G$. There is a natural isomorphism
	\begin{align}\label{formula::Mackey-restriction}
	\res^G_K \, \cind_H^G \sigma \cong  \mathop{\bigoplus}\limits_{g \in K \backslash G /H} \cind_{K \cap \prescript{g}{}{H}}^K \res^{\prescript{g}{}{H}}_{K \cap \prescript{g}{}{H}}  \prescript{g}{}{\sigma}
	\end{align}
	where $\prescript{g}{}{H}$ is the subgroup $gHg^{-1}$ of $G$ and $\prescript{g}{}{\sigma}$ is the representation of $\prescript{g}{}{H}$ on $W$ defined by $\prescript{g}{}{\sigma}(ghg^{-1}) = \sigma(h)$.
\end{prop}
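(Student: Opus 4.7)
The plan is to realize $\cind_H^G\sigma$ concretely as the space of smooth functions $f\colon G\to W$ satisfying $f(hg)=\sigma(h)f(g)$ with support compact modulo $H$ (matching the paper's conventions), and then to decompose it according to the partition $G=\bigsqcup_{g\in K\backslash G/H}KgH$. With $G$ acting on this function space by translation, the restriction to $K$ permutes cosets in $H\backslash G$ with orbits indexed by $K\backslash G/H$, and the stabilizer in $K$ of the coset represented by $g$ is precisely $K\cap {}^gH$. Both pieces of data in the right hand side of the statement therefore arise geometrically from this orbit-stabilizer picture.

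Fix a system of representatives $\{g\}$ for $K\backslash G/H$ and, for each such $g$, let $V_g\subset\cind_H^G\sigma$ be the subspace of those $f$ with $\mathrm{supp}(f)\subseteq KgH$. Each $V_g$ is $K$-stable since $K\cdot KgH=KgH$, and the compact-support-modulo-$H$ hypothesis forces any $f$ to meet only finitely many cosets $Hx$, hence only finitely many double cosets $KgH$. Consequently $f=\sum_g f|_{KgH}$ is a finite decomposition, giving
\begin{align*}
\cind_H^G\sigma=\bigoplus_{g\in K\backslash G/H}V_g
\end{align*}
as $K$-representations. This is the crucial use of \emph{compact} induction: for the full smooth induction $\ind_H^G$ the analogous reasoning would yield a direct product instead.

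Next, for each representative $g$, I would construct a $K$-equivariant isomorphism $\Phi_g\colon V_g\xrightarrow{\;\sim\;}\cind_{K\cap {}^gH}^{K}\bigl({}^g\sigma|_{K\cap {}^gH}\bigr)$ by restriction along a section through $g$, namely $\Phi_g(f)(k):=f(kg)$ (the precise formula depends on which of the equivalent conventions one adopts). The necessary verifications are routine: equivariance of $\Phi_g(f)$ under $h'=ghg^{-1}\in K\cap {}^gH$ reduces to the identity ${}^g\sigma(h')=\sigma(h)$ via $\Phi_g(f)(kh')=f(kh'g)=f(kgh)=\sigma(h)\Phi_g(f)(k)$ (up to inverses); $K$-equivariance of $\Phi_g$ itself is immediate; bijectivity follows from writing down the explicit inverse which sends $\phi$ to the function supported in $KgH$ with $kgh\mapsto\sigma(h)\phi(k)$, whose well-definedness rests on the fact that any ambiguity $k_1gh_1=k_2gh_2$ forces $k_2^{-1}k_1=gh_2h_1^{-1}g^{-1}\in K\cap{}^gH$, and then the equivariance of $\phi$ under $K\cap{}^gH$ supplies the required compatibility.

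The main technical point is purely bookkeeping: one must verify that compact support modulo $H$ for $f\in V_g$ transfers to compact support modulo $K\cap{}^gH$ for $\Phi_g(f)$. This works because $K\cap{}^gH$ is exactly the stabilizer of $gH$ in $K$, so each coset $xH\subseteq\mathrm{supp}(f)\cap KgH$ pulls back under the section to a single coset of $K\cap{}^gH$ in $K$; finitely many on the source give finitely many on the target. Smoothness is automatic since $H$, and hence $K\cap{}^gH$, is open. Independence of the construction from the choice of double-coset representatives (replacing $g$ by $kgh$) is the standard routine check identifying the two copies of the induced representation via conjugation and translation.
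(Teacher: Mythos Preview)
Your proposal is correct and follows essentially the same route as the paper: both decompose $\cind_H^G\sigma$ along the double-coset partition $G=\bigsqcup KgH$ and identify each piece with $\cind_{K\cap{}^gH}^K({}^g\sigma)$ via evaluation at the representative $g$. The only cosmetic difference is that the paper phrases this using an explicit $\BC$-basis (translates $k_j g f_w$ of the standard generators $f_w$ supported on $H$, with the isomorphism $k_j g f_w \mapsto k_j \phi_w$ given by $k_j\phi_w = \lambda(g)\,k_jgf_w|_K$), whereas you carry out the equivalent support-decomposition argument directly on function spaces.
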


    \begin{proof}
	Suppose $\{g_i\}$ is a set of representative of double cosets $K\backslash G / H$ and $\{k^{(i)}_j\}$ is a set of representative of cosets $K/K\cap g_i H g_i^{-1}$. Then $\{k^{(i)}_jg_i\}$ is a set of representative of cosets $G/H$. For $w\in W$, let $f_w \in \cind_H^G \sigma$ such that $f_w$ is supported in $H$ and $f_w(h) = \sigma(h) w$. Suppose $\BW$ is a $\BC$-basis of $W$. Then, by the lemma in \cite[2.5]{B-H-GL2}, $\{k^{(i)}_jg_i f_w \ |\ w\in \BW\}$ is a $\BC$-basis of $\cind_H^G \sigma$. For a fixed $g_i=g$, we show that $\{k_jgf_w \ |\ w\in \BW \}$ is a $\BC$-basis of a $K$-representation that is isomorphic to $\cind^K_{K \cap \prescript{g}{}{H}}\mathrm{Res}^{\prescript{g}{}{H}}_{K \cap \prescript{g}{}{H}} \prescript{g}{}{\sigma}$. This latter representation has a $\BC$-basis $\{k_j\phi_w \ |\ w\in \BW \}$, where $\phi_w$ is supported in $K \cap \prescript{g}{}{H}$ and $\phi_w(k) = \sigma(g^{-1}kg)w$. Then the map $k_jgf_w \mapsto k_j\phi_w$ extends linearly and gives the required $K$-isomorphism. In fact, $k_j\phi_w = \lambda(g) k_jgf_w |_K$, where $\lambda(g)$ is the left translation by $g$, so the $K$-equivalence follows immediately. 
    \end{proof}


	\bibliographystyle{alphanum}
	\bibliography{references}

\def\cprime{$'$} \def\cprime{$'$}
\begin{thebibliography}{BHK2}

\bibitem[AC]{Arthur-Clozel}
J.~Arthur and L.~Clozel.
\newblock {\em Simple Algebras, Base Change, and the Advanced Theory of the
  Trace Formula.(AM-120)}, volume 120.
\newblock Princeton University Press, 2016.

\bibitem[BF]{Bushnell-Frohlich-Gauss+sum}
C.~Bushnell and A.~Fr{\"o}hlich.
\newblock Non-abelian congruence gauss sums and p-adic simple algebras.
\newblock {\em Proceedings of the London Mathematical Society}, 3(2):207--264,
  1985.

\bibitem[BH1]{B-H-ETameLLC-I}
C.~Bushnell and G.~Henniart.
\newblock The essentially tame local langlands correspondence, i.
\newblock {\em Journal of the American Mathematical Society}, 18(3):685--710,
  2005.

\bibitem[BH2]{B-H-GL2}
C.~Bushnell and G.~Henniart.
\newblock {\em The {L}ocal {L}anglands {C}onjecture for {${\rm GL}(2)$}}.
\newblock Grundlehren der mathematischen Wissenschaften, Vol. 335. Springer,
  2006.

\bibitem[BH3]{B-H-Explicit+level+0}
C.~Bushnell and G.~Henniart.
\newblock Explicit functorial correspondences for level zero representations of
  p-adic linear groups.
\newblock {\em Journal of Number Theory}, 131(2):309--331, 2011.

\bibitem[BHK1]{B-H-K-LLC+Conductor}
C.~Bushnell, G.~Henniart, and P.~Kutzko.
\newblock Correspondance de langlands locale pour gln et conducteurs de paires.
\newblock 31(4):537--560, 1998.

\bibitem[BHK2]{B-H-K-Conductor}
C.~Bushnell, G.~Henniart, and P.~Kutzko.
\newblock Local rankin-selberg convolutions for $\mathrm{GL}_n$: explicit
  conductor formula.
\newblock {\em Journal of the American Mathematical Society}, 11(3):703--730,
  1998.

\bibitem[BK]{Bushnell-Kutzko}
C.~Bushnell and P.~Kutzko.
\newblock {\em The Admissible Dual of GL (N) via Compact Open
  Subgroups.(AM-129)}, volume 129.
\newblock Princeton University Press, 2016.

\bibitem[Bus]{Bushnell-orders+Gauss+sums}
C.~Bushnell.
\newblock Hereditary orders, gauss sums and supercuspidal representations of
  gln.
\newblock {\em J. reine angew. Math}, 375(376):184--210, 1987.

\bibitem[Del]{Deligne-Local+Constants}
P.~Deligne.
\newblock Les constantes des {\'e}quations fonctionnelles des fonctions l.
\newblock pages 501--597, 1973.

\bibitem[Gel]{Gelfand-finite}
S.~I. Gelfand.
\newblock Representations of the full linear group over a finite field.
\newblock {\em Mathematics of the USSR-Sbornik}, 12(1):13, 1970.

\bibitem[GJ]{Godement-Jacquet}
R.~Godement and H.~Jacquet.
\newblock {\em Zeta functions of simple algebras}, volume 260.
\newblock Springer, 2006.

\bibitem[Gre]{Green}
J.~A. Green.
\newblock The characters of the finite general linear groups.
\newblock {\em Transactions of the American Mathematical Society},
  80(2):402--447, 1955.

\bibitem[Hen1]{Henniart-Characterization+LLC}
G.~Henniart.
\newblock Une caract{\'e}risation de la correspondance de langlands locale pour
  $\mathrm{GL}(n)$.
\newblock {\em Bulletin de la Soci{\'e}t{\'e} Math{\'e}matique de France},
  130(4):587--602, 2002.

\bibitem[Hen2]{Henniart-proof+of+LLC}
Guy Henniart.
\newblock Une preuve simple des conjectures de langlands pour gl (n) sur un
  corps p-adique.
\newblock {\em Inventiones mathematicae}, 139(2):439--455, 2000.

\bibitem[HH]{Henniart-Herb-AI}
G.~Henniart and R.~Herb.
\newblock Automorphic induction for $\mathrm{GL}(n) $ (over local
  nonarchimedean fields).
\newblock {\em Duke Mathematical Journal}, 78(1):131--192, 1995.

\bibitem[HT]{Harris-Taylor}
Michael Harris and Richard Taylor.
\newblock {\em The Geometry and Cohomology of Some Simple Shimura
  Varieties.(AM-151)}, volume 151.
\newblock Princeton university press, 2001.

\bibitem[JPSS]{J-PS-S-Rankin+Selberg}
H.~Jacquet, I.~I Piatetskii-Shapiro, and J.~Shalika.
\newblock Rankin-selberg convolutions.
\newblock {\em American journal of mathematics}, 105(2):367--464, 1983.

\bibitem[Nie1]{Nien-Jacquet+conjecture}
C.~Nien.
\newblock A proof of the finite field analogue of jacquet's conjecture.
\newblock {\em American Journal of Mathematics}, 136(3):653--674, 2014.

\bibitem[Nie2]{Nien-n*1}
C.~Nien.
\newblock n$\times$ 1 local gamma factors and gauss sums.
\newblock {\em Finite Fields and Their Applications}, 46:255--270, 2017.

\bibitem[NZ]{Nien-Zhang}
C.~Nien and L.~Zhang.
\newblock Converse theorem meets gauss sums.
\newblock {\em arXiv preprint arXiv:1806.04850}, 2018.

\bibitem[Rod]{Roditty}
E.~Roditty.
\newblock {\em On Gamma factors and Bessel functions for representations of
  general linear groups over finite fields}.
\newblock Tel Aviv University, 2010.

\bibitem[Ser]{Serre}
J.~P. Serre.
\newblock {\em Local fields}, volume~67.
\newblock Springer Science \& Business Media, 2013.

\bibitem[Sil]{Silberger-Langlands+Classification}
A.~J. Silberger.
\newblock The langlands quotient theorem forp-adic groups.
\newblock {\em Mathematische Annalen}, 236(2):95--104, 1978.

\bibitem[Zel]{Zelevinsky-II}
A.~V. Zelevinsky.
\newblock Induced representations of reductive p-adic groups. ii. on
  irreducible representations of $\mathrm{GL}(n)$.
\newblock {\em Annales Scientifiques de l'{\'E}cole Normale Sup{\'e}rieure},
  13(2):165--210, 1980.

\end{thebibliography}
	
\end{document}